\newtheorem{theorem}{Theorem}[section]
\theoremstyle{definition}
\newtheorem{definition}[theorem]{Definition}
\theoremstyle{corollary}
\newtheorem{corollary}[theorem]{Corollary}
\theoremstyle{example}
\theoremstyle{note}
\theoremstyle{notation}
\numberwithin{equation}{section}
\def\blfootnote{\gdef\@thefnmark{}\@footnotetext}
\begin{document}
\title[Image partition regular matrices, Solution in C-sets]
{Infinite image partition regular matrices - Solution in C-sets}
\author{Sukrit Chakraborty$^1$ and Sourav Kanti Patra$^2$}
\address{Indian Statistical Institute, 203, B.T. Road, Kolkata- 700108, India$^1$} 
\address{Ramakrishna Mission Vidyamandira, Belur Math, Howrah-711202, West Bengal, India$^2$}
\email{sukritpapai@gmail.com$^1$, souravkantipatra@gmail.com$^2$}
\blfootnote{\textup{2010} \textit{Mathematics Subject Classification}: Primary : 05D10 Secondary : 22A15}
\keywords{Algebra in the Stone-$\breve{C}$ech compactification, $C$-set, $C^{*}$-set.}

\newcommand{\acr}{\newline\indent}
\begin{abstract}
A finite or infinite matrix $A$ is image partition regular provided that whenever $\mathbb{N}$ is finitely colored, there must be some $\overset{\rightarrow}{x}$ with entries from $\mathbb{N}$ such that all entries of $A \overset{\rightarrow}{x}$ are in the same color class. Comparing to the finite case, infinite image partition regular matrices seem more harder to analyze. The concept of centrally image partition regular matrices were introduced to extend the results of finite image partition regular matrices to infinite one. In this paper, we shall introduce the notion of C-image partition regular matrices, an interesting subclass of centrally image partition regular matrices. Also we shall see that many of known centrally image partition regular matrices are C-image partition regular.   
\end{abstract}
\maketitle

\section{Introduction}
The classical theorems of Ramsey Theory can be spontaneously stated as statements about image partition regular matrices which is why the study of Image partition regular matrices seeks remarkable attention. For example, Schur’s Theorem [\cite{schur1917kongruenz}] and the length $4$ version of van der Waerden’s Theorem [\cite{van1927beweis}] assures us that  the matrices 
$$\begin{pmatrix}
1 & 0 \\ 0 & 1 \\ 1 & 1 
\end{pmatrix} \text{ and }
\begin{pmatrix}
1 & 0 \\ 1 & 1\\ 1 & 2 \\ 1 & 3
\end{pmatrix}
$$
are image partition regular.

Let us recall the following well known definition of the image partition regularity.
\begin{definition}
Let $A$ be a $m \times n$ matrix with entries from $\mathbb{Q}$ for some $m,n \in \mathbb{N}$. We call the matrix $A$ to be an image partition regular matrix over $\mathbb{N}$ if and only if there exists $i \in \{1,2, \cdots , p\}$ and $\overset{\rightarrow}{x} \in \mathbb{N}^{n}$ such that $A \overset{\rightarrow}{x} \in E_i^m$, whenever we are able to write $\mathbb{N} = \bigcup_{i=1}^{p}E_i$ for some $p \in \mathbb{N}$.
\end{definition}
Several characterizations of finite image partition regular matrices involve the notion of \enquote{first entries matrix}, a concept based on Deuber’s $(m, p, c)$ sets.

We recall the following definition from \cite{iprm2000hindman}.
\begin{definition}
Let $A$ be a $p \times q$ matrix with rational entries. Then $A$ is a first entries matrix if and only if no row of $A$ is $\overset{\rightarrow}{0}$ and there exist strictly positive rational numbers $t_1, t_2, \cdots , t_q $ such that $a_{i,j} = t_j$, where $i \in \{1, 2, \cdots , p\}$ and $j = \min\{l \in \{1, 2, \cdots , q\} : a_{i,l} \neq 0\}$.

We call $t_j$ to be the first entry of $A$ if there exists $i \in \{1, 2, \cdots , p\}$ such that $j = \min\{l \in \{1, 2, \cdots , q\} : a_{i,l} \neq 0 \}$.
\end{definition}
Some of the known characterizations of finite image partition regular matrices involve the notion of central sets while the  famous concept was introduced in [\cite{frus81}] and defined in terms of the view point of topological dynamics. These sets enjoy very strong combinatorial properties. (See [\cite{frus81}, Proposition $8.21$] or [\cite{hindman1998algebra}, Chapter $14$].) They have a nice
characterization in terms of the algebraic structure of $\beta \mathbb{N}$, the Stone-{\v{C}}ech compactification
of $\mathbb{N}$. We shall present this characterization below, after introducing the necessary background information.

Let $(S, +)$ be an infinite discrete semigroup. The points of $\beta S$ are taken to be the ultrafilters on $S$ with the understanding that the principal ultrafilters are being identified with the points of $S$. It is a folklore that for a given set $A \subseteq S$, $\overline{A} = \{p \in \beta S : A \in p\}$. The set $\{A : A \subseteq S\}$ turns out to be a basis for the open sets (as well as a basis for the closed sets) of $\beta S$.

One can naturally extend of the operation $+$ of $S$ to the whole of $\beta S$ by making $\beta S$ a compact right topological semigroup with its topological center containing $S$. This says that for each $p \in \beta S$ the function $\mu_p : \beta S \to \beta S$ is continuous and for each $x \in S$, the function $\nu_x : \beta S \to \beta S$ is continuous, where $\mu_p(q) = q + p$ and $\nu_x(q) = x + q$. Given $p,q \in \beta S$ and $A \subseteq S$, $A \in p+q$ if and only if $\{x \in S : -x +A \in q\} \in p$, where $-x +A = \{y \in S : x+y \in A\}$.

If a non-empty subset $I$ of a semigroup $(T,+)$ satisfies $T + I \subseteq I$, we call $I$ to be a left ideal of $T$ and a right ideal if $I + T \subseteq I$. A two sided ideal (or simply an ideal) is both a left and a right ideal. A minimal left ideal is a left ideal that does not contain any proper left ideal. Now it is very obvious to analogously define a minimal right ideal and the smallest ideal.

Any compact Hausdorff right topological semigroup $(T,+)$ contains idempotents and therefore has a smallest two sided ideal 
\begin{align*}
K(T) &= \bigcup\{\mathcal{L} : \mathcal{L}\text{ is a minimal left ideal of T}\}\\
&= \bigcup\{\mathcal{R} : \mathcal{R}\text{ is a minimal right ideal of T}\}. 
\end{align*}  
Given a minimal left ideal $\mathcal{L}$ and a minimal right ideal $\mathcal{R}$, it turns out that $\mathcal{L} \cap \mathcal{R}$ is a group and therefore contains an idempotent. If $p$ and $q$ are idempotents in $T$, we write $p \leqslant q$ if and only if $p + q = q + p =p$. An idempotent is minimal with respect to this relation if and only if it is a member of the smallest ideal $K(T)$ of $T$.

A beautiful inauguration to the algebra of $\beta S$ is given in \cite{hindman1998algebra}.
\begin{definition}
Let $(S,+)$ be an infinite discrete semigroup. A subset in $S$ is said to be Central if and only if it is contained in some minimal idempotent of $(\beta S, +)$.
\end{definition}
Now we state the most general version of Central Sets Theorem from \cite{de2008new}. We state it here only for the commutative subgroups.
\begin{theorem} \label{thm1.4}
Let $(S,+)$ be a commutative semigroup and denote the set of all sequences in $S$ by $\tau$. Let $C \subseteq S$ be central , then there exists functions $\alpha : \mathcal{P}_f(\tau)\to S$ and $H: \mathcal{P}_f(\tau) \to \mathcal{P}_f(\mathbb{N})$ such that 
\begin{enumerate}
\item \label{1.41}if $F,G \in \mathcal{P}_f(\tau)$ and $F \subsetneqq G$ then $\max H(F) < \min H(G)$, and 
\item \label{1.42}whenever $m \in \mathbb{N}$, $G_1, G_2, \cdots , G_m \in \mathcal{P}_f(\tau)$, $G_1 \subsetneqq G_2 \subsetneqq \cdots \subsetneqq G_m$ and for each $i \in \{1,2, \cdots , m\}$, $f_i \in G_i$, one has $$\sum_{i=1}^{m}\big(\alpha(G_i)+\sum_{t\in H(G_i)}f_i(t)\big)\in C.$$
\end{enumerate}
\end{theorem}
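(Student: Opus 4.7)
The plan is to deduce the theorem from the algebraic structure of $\beta S$ by constructing $\alpha$ and $H$ via a transfinite recursion on a well-ordering of $\mathcal{P}_{f}(\tau)$ compatible with proper inclusion.

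Since $C$ is central, I would first fix a minimal idempotent $p \in \beta S$ with $C \in p$ and set $C^{*} = \{s \in C : -s + C \in p\}$. Two standard facts (see Lemma~4.14 of \cite{hindman1998algebra}) will drive the recursion: $C^{*} \in p$, and for every $s \in C^{*}$, $-s + C^{*} \in p$. I would then strengthen the inductive hypothesis so that every sum required by condition (\ref{1.42}) is asked to lie in $C^{*}$ rather than merely in $C$; this upgrade is what makes the recursion self-propagating, because $C^{*}$, unlike $C$, is hereditary under the translation $b \mapsto -b + C^{*}$ whenever $b \in C^{*}$.

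Well-order $\mathcal{P}_{f}(\tau)$ as $\{F_{\gamma}\}$ so that $F_{\beta} \subsetneqq F_{\gamma}$ forces $\beta < \gamma$. At stage $F = F_{\gamma}$, let $N = \max\{\max H(F') : F' \subsetneqq F\}$, which converts condition~(\ref{1.41}) into the single requirement $\min H(F) > N$. Let $B$ be the finite collection of partial sums $\sum_{i=1}^{m-1}\bigl(\alpha(G_{i}) + \sum_{t \in H(G_{i})} f_{i}(t)\bigr)$ coming from all chains $G_{1} \subsetneqq \cdots \subsetneqq G_{m-1} \subsetneqq F$ and all choices $f_{i} \in G_{i}$, augmented by the empty prefix. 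The strengthened induction hypothesis gives $B \subseteq C^{*}$, so the finite intersection
$$A \;=\; C^{*} \cap \bigcap_{b \in B}(-b + C^{*})$$
belongs to $p$. The inductive step is thereby reduced to producing $\alpha(F) \in S$ and $H(F) \in \mathcal{P}_{f}(\mathbb{N})$ with $\min H(F) > N$ such that $\alpha(F) + \sum_{t \in H(F)} f(t) \in A$ for every $f \in F$; once achieved, both conditions propagate to all chains through $F$.

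The principal obstacle is realising this last condition simultaneously for the finite family $F = \{f^{(1)}, \dots, f^{(k)}\}$: a single pair $(\alpha, H)$ must push all of $\alpha + \sum_{t \in H} f^{(j)}(t)$ into $A$ at once. My plan is to iterate the shrinking $A \mapsto \{a \in A : -a + A \in p\}$, which remains in $p$ by idempotence, to obtain a descending chain of $p$-large subsets of $A$, and then to build $H(F)$ one index at a time while accumulating $\alpha(F)$, arranging that the running partial sum associated with each $f^{(j)}$ always sits inside the appropriate level of this chain. The algebraic input is only the hereditary property of $p$-large sets under the shift $s \mapsto -s + A$ for $s \in A$, precisely the feature that powers the Galvin--Glazer argument for Hindman's theorem; the remaining bookkeeping is purely combinatorial, but it is where the argument is most delicate.
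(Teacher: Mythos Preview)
The paper does not prove Theorem~\ref{thm1.4}; it merely quotes the Central Sets Theorem from \cite{de2008new}. So there is no in-paper proof to compare against, only the original De--Hindman--Strauss argument. Your overall architecture --- pass to $C^{*}$, well-order $\mathcal{P}_f(\tau)$ compatibly with $\subsetneqq$, and at each stage reduce to hitting a single $p$-large target set $A$ simultaneously for all $f\in F$ --- is exactly the skeleton of that original proof.

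The gap is in your last paragraph. You assert that the step ``find $\alpha\in S$ and $H\in\mathcal{P}_f(\mathbb{N})$ with $\min H>N$ such that $\alpha+\sum_{t\in H}f(t)\in A$ for every $f\in F$'' can be carried out using only the hereditary shift property of $p$-large sets, i.e.\ only idempotence of $p$, via a Galvin--Glazer style construction. That is not correct. What you are asking for at this step is precisely that $A$ be a $J$-set (for the shifted sequences), and this does \emph{not} follow from $A\in p$ for an arbitrary idempotent $p$: there exist idempotents in $\beta\mathbb{N}\setminus J(\mathbb{N})$, hence sets in such idempotents that fail the $J$-set property. Minimality of $p$ is essential here, and it is used not through more refined shrinking of $A$ but through a genuinely different combinatorial input: one shows (as in \cite{de2008new}) that piecewise syndetic sets are $J$-sets by a Hales--Jewett/van der Waerden type argument, and members of minimal idempotents are piecewise syndetic. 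Your sketch never invokes minimality after fixing $p$, and the iterated-star construction you describe cannot manufacture a common $\alpha$ and $H$ working for several sequences at once --- Galvin--Glazer handles one sequence, not a finite family with a shared shift and index set.

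In short: the recursion framework is right, but the engine that drives each step is the $J$-set lemma for central (piecewise syndetic) sets, and that lemma requires an argument of Hales--Jewett type, not merely idempotent shrinking. You should either cite that lemma or outline its proof; as written, the ``purely combinatorial bookkeeping'' you defer is in fact the substantive part of the theorem.
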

Recently a lot of attention has been paid to those sets which satisfy the conclusion of the latest Central Sets Theorem. Like Central sets they also contain the image of finite image partition regular matrices.  
\begin{definition} \label{def1.5}
Let $(S,+)$ be a commutative semigroup. Also let $\tau = \mathbb{N}_S$ be the set of all sequences in $S$ and  $C \subseteq S$. If there exists functions $\alpha : \mathcal{P}_f(\tau)\to S$ and $H: \mathcal{P}_f(\tau) \to \mathcal{P}_f(\mathbb{N})$ such that the conditions \ref{1.41} and \ref{1.42} of Theorem \ref{thm1.4} is satisfied, then we call $C$ to be a $C$ set. 
\end{definition}
Therefore we can readily observe that Central sets are in particular C-sets. We now present some notations from \cite{de2008new}.
\begin{definition}
Let $(S,+)$ be a commutative semigroup, and let $\tau$ be as in the Definition \ref{def1.5}.
\begin{enumerate}
\item $A \subseteq S$ is said to be a J-set if for every $F \in \mathcal{P}_f(\tau)$ there exists $a \in S$ and $H \in \mathcal{P}_f(\mathbb{N})$ such that for all $f \in F$, $$a + \sum_{t \in H}f(t) \in A.$$
\item $J(S) = \{p \in \beta S : (\forall A \in p)(A \text{ is a J-set})\}$.
\end{enumerate}
\end{definition}
\begin{theorem}
Let $(S,+)$ be a discrete commutative semigroup and $A$ be a subset of $S$. Then $A$ is a J set if and only if $J(S) \bigcap ClA \neq \phi$.  
\end{theorem}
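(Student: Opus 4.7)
The plan is to establish the two directions separately.

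For the reverse direction: if $p \in J(S) \cap ClA$, then $A \in p$ (from $p \in ClA$ together with the basic closed set description of $\beta S$ recalled in the introduction) and every member of $p$ is a J-set (from $p \in J(S)$), so in particular $A$ is a J-set. This direction is essentially an unpacking of the two definitions.

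For the forward direction, suppose $A$ is a J-set; the aim is to produce an ultrafilter $p \in \beta S$ with $A \in p$ such that every member of $p$ is a J-set, so that $p \in J(S) \cap ClA$. The natural route is a Zorn's Lemma argument on the family $\mathcal{J}$ of filters $\mathcal{F}$ on $S$ such that $A \in \mathcal{F}$ and every element of $\mathcal{F}$ is a J-set. This family is nonempty (the principal filter generated by $A$ qualifies, since supersets of J-sets are J-sets) and closed under unions of chains, so Zorn yields a maximal element $\mathcal{F}_0$. The next step is to verify that $\mathcal{F}_0$ is an ultrafilter: if some $E \subseteq S$ satisfied $E \notin \mathcal{F}_0$ and $S \setminus E \notin \mathcal{F}_0$, then maximality would force the existence of $B, B' \in \mathcal{F}_0$ with $B \cap E$ and $B' \cap (S \setminus E)$ both failing to be J-sets. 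Then $B \cap B' \in \mathcal{F}_0$ would be a J-set decomposing as $(B \cap B' \cap E) \cup (B \cap B' \cap (S \setminus E))$, with each piece contained in a non-J-set and hence itself a non-J-set---which contradicts partition regularity of J-sets. Thus $\mathcal{F}_0$ is the desired ultrafilter $p$.

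The main obstacle is therefore partition regularity of J-sets: if $C$ is a J-set and $C = D_1 \cup D_2$, then one of $D_1, D_2$ is a J-set. This is the nontrivial combinatorial input and is established in \cite{de2008new}. The strategy there is to assume witnessing finite families $F_1, F_2 \in \mathcal{P}_f(\tau)$ for the hypothetical non-J-ness of $D_1, D_2$, combine them into an enlarged family $F = F_1 \cup F_2$, apply the J-set property of $C$ to $F$ (iterating via the observation that $\min H$ can be taken arbitrarily large, which produces infinitely many pairwise disjoint witness pairs $(a_k,H_k)$), and then extract via a Ramsey-style pigeonhole on the induced $2$-coloring of $F$ by membership in $D_1$ versus $D_2$ a single witness whose sums against every $f \in F_j$ lie in a common $D_j$, contradicting the choice of $F_j$. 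Carrying out this pigeonhole refinement so that the distilled $(a,H)$ still has the exact form demanded by the J-set definition is the technical heart of the proof; modulo that combinatorial lemma, the Zorn construction above finishes the argument.
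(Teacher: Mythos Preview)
Your proposal is correct and follows essentially the same approach as the paper: the paper simply cites Theorem~3.11 of \cite{hindman1998algebra} (the general principle that any member of a partition regular family lies in some ultrafilter all of whose members belong to that family) together with the partition regularity of J-sets, while you have unpacked precisely that Zorn's Lemma construction and sketched the partition regularity argument from \cite{de2008new}. The content is the same; you have just made explicit what the paper leaves to references.
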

\begin{proof}
The theorem follows from Theorem $3.11$ of \cite{hindman1998algebra} while noting that the collection of J sets form a partition regular family.
\end{proof}
The following is a consequence of Theorem $3.8$ of \cite{de2008new}.
\begin{theorem}\label{thm1.8}
Let $(S,+)$ be a commutative semigroup and $A \subseteq S$. Then $A$ is a C-set if and only if $clA \bigcap J(S)$ contains atleast an idempotent $p$. 
\end{theorem}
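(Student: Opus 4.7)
The plan is to prove both implications by leveraging the algebraic structure of $\beta S$, in particular the fact that $J(S)$ is a closed subsemigroup of $\beta S$. Closedness is immediate from the identification $J(S) = \bigcap_{B \text{ not a J-set}} \overline{S\setminus B}$, and the subsemigroup property follows from a standard calculation with ultrafilter addition on J-sets. I would also repeatedly invoke the characterization recalled just before Theorem \ref{thm1.8}, namely that $B$ is a J-set if and only if $\overline{B}\cap J(S)\neq\emptyset$.

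\textbf{Backward direction.} Assume $p$ is an idempotent of $J(S)$ with $A\in p$. I would construct $\alpha:\mathcal{P}_f(\tau)\to S$ and $H:\mathcal{P}_f(\tau)\to\mathcal{P}_f(\mathbb{N})$ by induction on $|F|$ while maintaining the invariant that every partial sum
\[
\sum_{i=1}^{m}\Big(\alpha(G_i)+\sum_{t\in H(G_i)}f_i(t)\Big),\qquad G_1\subsetneq G_2\subsetneq\cdots\subsetneq G_m,\ f_i\in G_i,
\]
lies in $A^{\star}=\{s\in A:-s+A\in p\}$. Since $p$ is idempotent, $A^{\star}\in p$ and $s\in A^{\star}$ implies $-s+A^{\star}\in p$. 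At stage $F$, only finitely many partial sums $s_1,\ldots,s_k$ arise from admissible chains ending strictly below $F$ (with $s=0$ for the empty previous chain), so $B=\bigcap_{j=1}^{k}(-s_j+A^{\star})\in p$ and hence is a J-set. Applying the J-set property to the finite family $F\subseteq\tau$, after a standard shift $f\mapsto f(\cdot+N)$ to push $\min H$ beyond $\max\{\max H(G):G\subsetneq F\}$, produces $a\in S$ and $H'\in\mathcal{P}_f(\mathbb{N})$ with $a+\sum_{t\in H'}f(t)\in B$ for every $f\in F$. Setting $\alpha(F)=a$ and $H(F)=H'$ preserves the invariant and secures condition \ref{1.41}; condition \ref{1.42} with $C=A$ follows since $A^{\star}\subseteq A$.

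\textbf{Forward direction.} Assume $A$ is a C-set with witnesses $\alpha,H$, and for each $F\in\mathcal{P}_f(\tau)$ define the tail set
\[
V_F=\Big\{\sum_{i=1}^{m}\Big(\alpha(G_i)+\sum_{t\in H(G_i)}f_i(t)\Big):F\subsetneq G_1\subsetneq\cdots\subsetneq G_m,\ f_i\in G_i\Big\}\subseteq A.
\]
Chain concatenation yields $s+V_{G_m}\subseteq V_F$ for any partial sum $s$ attached to a chain $F\subsetneq G_1\subsetneq\cdots\subsetneq G_m$, and a routine ultrafilter computation then shows that $T=\bigcap_{F\in\mathcal{P}_f(\tau)}\overline{V_F}$ is a closed subsemigroup of $\beta S$. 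Each $V_F$ is itself a J-set: given $F'\in\mathcal{P}_f(\tau)$, choose $F''$ with $F\cup F'\subsetneq F''$ and apply condition \ref{1.42} to the length-one chain $\{F''\}$; this yields $\alpha(F'')+\sum_{t\in H(F'')}f(t)\in V_F$ for every $f\in F''\supseteq F'$. By the characterization of J-sets recalled above, $\overline{V_G}\cap J(S)\neq\emptyset$ for every $G$, and the finite intersection property $\bigcap_{i}\overline{V_{F_i}}\supseteq\overline{V_{F_1\cup\cdots\cup F_k}}$ then forces $T\cap J(S)\neq\emptyset$. As a closed subsemigroup of $\beta S$, $T\cap J(S)$ contains an idempotent $p$; since $V_{\emptyset}\subseteq A$, the ultrafilter $p$ lies in $\overline{A}\cap J(S)$, as desired.

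The principal obstacle is the backward direction: one must keep the invariant "every partial sum over every admissible chain lies in $A^{\star}$" intact through the induction, which requires both the idempotency-driven observation $s\in A^{\star}\Rightarrow -s+A^{\star}\in p$ to process all prior partial sums simultaneously, and the shifting trick on sequences to enforce condition \ref{1.41}. The forward direction is mostly structural once $V_F$ has been identified as the correct candidate and chain concatenation is in hand.
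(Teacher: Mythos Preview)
Your proposal is correct and in fact supplies what the paper does not: the paper's entire proof of Theorem~\ref{thm1.8} is the sentence ``The following is a consequence of Theorem~3.8 of \cite{de2008new},'' so there is no in-paper argument to compare against. What you have written is essentially the proof from that cited reference---the $A^{\star}$-based inductive construction for the backward direction and the tail-set subsemigroup $T=\bigcap_F\overline{V_F}$ for the forward direction are exactly the machinery of De, Hindman and Strauss.

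Two minor technical remarks. In the forward direction you finish by appealing to $V_{\emptyset}\subseteq A$, but $\emptyset\notin\mathcal{P}_f(\tau)$; the repair is immediate, since condition~\ref{1.42} of Definition~\ref{def1.5} already forces $V_F\subseteq A$ for every nonempty $F$, whence $T\subseteq\overline{A}$ directly. In the backward direction the shorthand ``$s=0$ for the empty previous chain'' tacitly assumes a zero element in $S$; in a bare commutative semigroup one simply includes $A^{\star}$ itself among the finitely many sets being intersected (to guarantee the new length-one sum lands in $A^{\star}$) rather than writing it as $-0+A^{\star}$. Neither point threatens the argument.
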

We also state the following theorem which is Theorem $3.5$ in \cite{de2008new}.
\begin{theorem}
Let $(S,+)$ be a discrete commutative semigroup, then $J(S)$ is a closed two sided ideal of $\beta S$ and $Cl K(\beta S) \subseteq J(S)$.
\end{theorem}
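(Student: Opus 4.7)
The plan is to split the theorem into three parts: closedness of $J(S)$, its two‑sided ideal property, and then to derive the inclusion $\mathrm{Cl}\, K(\beta S) \subseteq J(S)$ formally.

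\textbf{Closedness.} I would first observe that $p \notin J(S)$ iff there is some $A \in p$ which fails to be a J-set. Thus $\beta S \setminus J(S) = \bigcup\{\overline{A} : A \subseteq S,\ A \text{ is not a J-set}\}$. Since each $\overline{A}$ is a basic open set in $\beta S$, the complement of $J(S)$ is open and $J(S)$ is closed. This is routine and I expect no difficulty here.

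\textbf{Two-sided ideal.} For the left ideal property, take $p \in \beta S$ and $q \in J(S)$, and fix $A \in p+q$. Setting $B = \{x \in S : -x + A \in q\}$, we have $B \in p$, so $B \ne \emptyset$. Pick any $x \in B$; then $-x+A \in q$, and because $q \in J(S)$ the set $-x+A$ is a J-set. Given $F \in \mathcal{P}_f(\tau)$, produce $a' \in S$ and $H \in \mathcal{P}_f(\mathbb{N})$ with $a' + \sum_{t \in H} f(t) \in -x+A$ for every $f \in F$. Setting $a = x + a'$ yields $a + \sum_{t\in H} f(t) \in A$, so $A$ is a J-set and $p+q \in J(S)$.

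For the right ideal property, take $p \in J(S)$, $q \in \beta S$, and $A \in p+q$. Again $B := \{x \in S : -x + A \in q\} \in p$; this time $p \in J(S)$ forces $B$ itself to be a J-set. Given $F = \{f_1,\dots,f_k\} \in \mathcal{P}_f(\tau)$, pick $b \in S$ and $H \in \mathcal{P}_f(\mathbb{N})$ with $b + \sum_{t\in H} f_i(t) \in B$ for each $i$. Hence $-(b+\sum_{t\in H} f_i(t)) + A \in q$ for every $i$, and the finite intersection of these sets is still in $q$, so nonempty; choose $c$ in the intersection. Then $b + \sum_{t\in H} f_i(t) + c \in A$ for every $i$, and by commutativity this equals $(b+c) + \sum_{t\in H} f_i(t)$. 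Thus $a := b+c$ witnesses that $A$ is a J-set. The main subtlety lies precisely here: without commutativity one cannot re-bracket $b+\sum f_i(t)+c$ as $a + \sum f_i(t)$, so this is the step where the commutative hypothesis is genuinely used, and I expect this to be the part of the argument a reader will want to check most carefully.

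\textbf{Inclusion of $\mathrm{Cl}\, K(\beta S)$.} From the preceding theorem, $J(S) \cap \overline{A} \neq \emptyset$ whenever $A$ is a J-set, so $J(S)$ is nonempty (e.g.\ $A = S$ is a J-set). Since $K(\beta S)$ is by definition the smallest two-sided ideal of $\beta S$ and $J(S)$ is a nonempty two-sided ideal, the general fact $K(T) \subseteq I$ for every ideal $I$ of a semigroup $T$ gives $K(\beta S) \subseteq J(S)$. Applying the closure operator and using that $J(S)$ is closed yields $\mathrm{Cl}\, K(\beta S) \subseteq \overline{J(S)} = J(S)$, completing the proof.
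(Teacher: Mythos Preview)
Your argument is correct in all three parts: the closedness is the standard complement-as-union-of-basic-opens computation; the left-ideal step is a direct translation argument; the right-ideal step correctly exploits that $B$ itself is a J-set and uses commutativity exactly where it is needed to move the element $c$ past $\sum_{t\in H} f_i(t)$; and the final inclusion follows formally once $J(S)$ is known to be a nonempty closed ideal.

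By contrast, the paper does not prove this theorem at all: it is simply quoted as Theorem~3.5 of \cite{de2008new}. So you have supplied a self-contained proof where the paper only gives a citation. What your write-up buys is that a reader need not chase the reference; what the paper's approach buys is brevity, since this result is standard background for the paper's purposes rather than a new contribution.
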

It is known that finite image partition regular matrices are compatible with respect to central sets. A similar result holds true for C-sets.
\begin{theorem} \label{thm1.10}
Let $A$ be a $p \times q$ matrix with rational entries for some natural numbers $p$ and $q$. The following statements are equivalent:
\begin{enumerate}
\item $A$ is an image partition regular matrix.
\item For any C-set $C$ of $\mathbb{N}$, $A\overset{\rightarrow}{x} \in C^p$ for some $\overset{\rightarrow}{x} \in \mathbb{N}^q$.
\item For any C-set $C$ of $\mathbb{N}$, $\{\overset{\rightarrow}{x} \in \mathbb{N}^q : A \overset{\rightarrow}{x} \in C^p\}$ is also a central subset $\mathbb{N}^v$.
\end{enumerate}
\end{theorem}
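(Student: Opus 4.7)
The plan is to prove the cycle $(3)\Rightarrow(2)\Rightarrow(1)\Rightarrow(3)$, with the last implication carrying the substance.

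The implication $(3)\Rightarrow(2)$ is immediate: any central subset of $\mathbb{N}^{q}$ is non-empty, so it contains some $\vec{x}$. For $(2)\Rightarrow(1)$, given a finite coloring $\mathbb{N}=\bigcup_{i=1}^{r}E_{i}$, I would first observe that the family of C-sets is partition regular. By Theorem~\ref{thm1.8} a set is a C-set exactly when an idempotent of $J(\mathbb{N})$ lies in its closure; and since $J(\mathbb{N})$ is a closed two-sided ideal of $\beta\mathbb{N}$, it is itself a compact right topological semigroup and so contains idempotents. Any such idempotent sits in $\overline{E_{i}}$ for some $i$, so $E_{i}$ is a C-set, and $(2)$ applied to $E_{i}$ delivers $\vec{x}\in\mathbb{N}^{q}$ with $A\vec{x}\in E_{i}^{p}$.

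The real content lies in $(1)\Rightarrow(3)$. Fix a C-set $C\subseteq\mathbb{N}$ and an idempotent $e\in\overline{C}\cap J(\mathbb{N})$ supplied by Theorem~\ref{thm1.8}, and write $T=\{\vec{x}\in\mathbb{N}^{q}:A\vec{x}\in C^{p}\}$. The strategy is to show $T$ belongs to a suitable idempotent ultrafilter on $\mathbb{N}^{q}$. Regarding $\mathbb{N}^{q}$ as a semigroup under coordinatewise addition and letting $\Phi:\beta\mathbb{N}^{q}\to(\beta\mathbb{Z})^{p}$ be the continuous semigroup homomorphism extending $\vec{x}\mapsto A\vec{x}$ coordinatewise, I define
\[
M \;=\; \Phi^{-1}\bigl((e,e,\ldots,e)\bigr)\;\subseteq\;\beta\mathbb{N}^{q}.
\]
Then $M$ is a compact subsemigroup, and image partition regularity of $A$ gives $M\neq\varnothing$ by a standard compactness argument: for every finite $\mathcal{F}\subseteq e$ the IPR property supplies $\vec{x}_{\mathcal{F}}\in\mathbb{N}^{q}$ with $A\vec{x}_{\mathcal{F}}\in B^{p}$ for each $B\in\mathcal{F}$, and any cluster point of the net $(\vec{x}_{\mathcal{F}})$ lies in $M$. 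Moreover every ultrafilter in $M$ contains $T$, so $M\subseteq\overline{T}$.

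The final and hardest step is to locate an idempotent of $M$ inside the ideal of $\beta\mathbb{N}^{q}$ needed to witness the conclusion. For the central conclusion one wants $M\cap K(\beta\mathbb{N}^{q})\neq\varnothing$, while for the C-set conclusion (which the hypothesis naturally invites via Theorem~\ref{thm1.8}) one wants $M\cap J(\mathbb{N}^{q})\neq\varnothing$. This is the main obstacle and requires verifying that $\Phi$ respects the relevant ideal, for instance by showing $\Phi(J(\mathbb{N}^{q}))\subseteq J(\mathbb{N})^{p}$ so that pulling back $e$ keeps us within the ideal. Once this compatibility is in place, $M\cap I$ is a non-empty compact subsemigroup of $\beta\mathbb{N}^{q}$, the Ellis--Numakura lemma supplies an idempotent $\vec{e}\in M\cap I$, and $\vec{e}\in\overline{T}$ then certifies that $T$ is a central (respectively, C-) subset of $\mathbb{N}^{q}$, closing the cycle.
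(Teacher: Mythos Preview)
Your compactness argument for $M\neq\varnothing$ does not go through, and this breaks the cycle. You assert that ``the IPR property supplies $\vec{x}_{\mathcal{F}}\in\mathbb{N}^{q}$ with $A\vec{x}_{\mathcal{F}}\in B^{p}$ for each $B\in\mathcal{F}$,'' but image partition regularity only guarantees a monochromatic image for a given finite colouring; it does not let you prescribe \emph{which} cell receives the image. What you actually need is that $A$ has images inside every member of $e$. Since $e$ is an idempotent of $J(\mathbb{N})$, every member of $e$ is a C-set by Theorem~\ref{thm1.8}, so this is exactly condition~(2). Your proof of $(1)\Rightarrow(3)$ therefore presupposes $(1)\Rightarrow(2)$, which your cycle $(3)\Rightarrow(2)\Rightarrow(1)\Rightarrow(3)$ never establishes. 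The implication $(1)\Rightarrow(2)$ is where the substance lives: it goes through the first-entries characterisation of finite IPR matrices together with the C-set form of the Central Sets Theorem (Definition~\ref{def1.5}), and this is precisely the argument the paper imports from Theorem~1.2 of \cite{iprm2}.

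The ``hardest step'' is also left open, and your proposed fix points the wrong way. Showing $\Phi(J(\mathbb{N}^{q}))\subseteq J(\mathbb{N})^{p}$ says nothing about whether the fibre $\Phi^{-1}\{(e,\ldots,e)\}$ meets $J(\mathbb{N}^{q})$, let alone $K(\beta\mathbb{N}^{q})$; you would need a statement in the opposite direction. Incidentally, condition~(3) cannot literally assert that $T$ is \emph{central}: for the $1\times 1$ matrix $A=(1)$ one has $T=C$, and a C-set that is not central already refutes it, so (3) should be read with ``C-set'' in place of ``central''. Even with that correction, the fibre approach still requires producing an idempotent of $M$ inside $J(\mathbb{N}^{q})$, and you have not indicated how. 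The route in \cite{iprm2} sidesteps this entirely by working directly with first-entries matrices rather than with fibres of $\Phi$.
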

\begin{proof}
The proof is similar as that of Theorem $1.2$ of \cite{iprm2}.
\end{proof}
The notion of image partition regular matrices extends naturally to infinite $\omega \times \omega$ matrices provided each row of the matrix contains only finitely many non-zero entries. (Here $\omega$, the first infinite cardinal, is also the set of nonnegative integers.)

It is an immediate consequence of Theorem $1.5(b)$ of \cite{iprm2000hindman} that whenever $A$ and $B$ are finite image partition regular matrices, so is $\begin{pmatrix}
A & \textbf{O}\\\textbf{O} & B \end{pmatrix}$, where $\textbf{O}$ represents a matrix of the appropriate size with all zero entries. However, the analogous result does not hold true for infinite image partition regular matrices because of  Theorem $3.14$ of \cite{deu95inf}.

Motivated by this distinction and by the condition of Theorem $1.5(l)$ of \cite{iprm2000hindman}, Hindman, Leader and strauss came up with the notion of \enquote{Centrally image partition regular matrices} and \enquote{Strongly Centrally image partition regular matrices} in Definition $2.7$ and in  Definition $2.10$ respectively in \cite{iprm2}.

In Section \ref{sec:s}, we shall introduce the notion of C-image partition regularity and see that the behaviour of this infinite image partition regularity almost same like Centrally image partition regularity. In Section \ref{sec:s:s} we shall give some classes of C-image partition regular matrices which also had occurred in the case of Centrally image partition regularity.  
\section{C-image partition regularity} \label{sec:s}   
Centrally image partition regular matrices were introduced in order to extend the results of finite image partition regular matrices to infinite one. In this section we shall introduce the notion of C-image partition regularity and see that parallel results for this type of image partition regularity also holds true.
\begin{definition}
Let $A$ be an $\omega \times \omega$ matrix with entries from $\mathbb{Q}$.
\begin{enumerate}
\item The matrix $A$ is C-image partition regular if and only if for every C-set $C$ of $\mathbb{N}$, one has $A\overset{\rightarrow}{x} \in C^{\omega}$ for some $\overset{\rightarrow}{x} \in \mathbb{N}^{\omega}$.
\item The matrix $A$ is strongly C-image partition regular if and only if for every C-set $C$ of $\mathbb{N}$, one has $\overset{\rightarrow}{y} = A\overset{\rightarrow}{x} \in C^{\omega}$ for some $\overset{\rightarrow}{x} \in \mathbb{N}^{\omega}$ and entries of $A\overset{\rightarrow}{x}$ corresponding to distinct rows of $A$ are distinct i.e., for all $i,j$ if row $i$ and row $j$ of $A$ are unequal then $y_i \neq y_j$.
\end{enumerate}
\end{definition}
Like Centrally image partition regular matrices, there is a simple necessary condition for a matrix to be strongly C-image partition regular which is as follows:
\begin{theorem}
Let $A$ be a strongly C-image partition regular matrix having no repeated rows. Then, $$\{i : \text{ for all }j \geqslant k, a_{i,j}= 0\} \text{ is finite for all }k \in \mathbb{N}.$$
\end{theorem}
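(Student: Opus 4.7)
The plan is to assume for contradiction that $I := \{i : a_{i,j} = 0 \text{ for all } j \geq k\}$ is infinite for some $k \in \mathbb{N}$ and to engineer a $2$-coloring of $\mathbb{N}$ one of whose cells is a C-set witnessing the failure of strong C-image partition regularity. For $i \in I$ write $v_i := (a_{i,0}, \ldots, a_{i,k-1}) \in \mathbb{Q}^k$; the no-repeated-rows hypothesis makes the $v_i$ pairwise distinct. The crucial observation is that for every $\vec{x} \in \mathbb{N}^\omega$ and every $i \in I$ the entry $(A\vec{x})_i = v_i \cdot (x_0, \ldots, x_{k-1})$ depends on only the first $k$ coordinates of $\vec{x}$. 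Since $\mathbb{N}$ itself is a C-set (any idempotent of $J(\mathbb{N})$, being an ultrafilter on $\mathbb{N}$, contains $\mathbb{N}$, so Theorem \ref{thm1.8} applies), applying the strong regularity hypothesis to $C = \mathbb{N}$ produces some $\vec{x}$ for which the values $v_i \cdot (x_0, \ldots, x_{k-1})$ are distinct positive integers indexed by $i \in I$. Hence the set
\[
\mathcal{Y} := \bigl\{\vec{y} \in \mathbb{N}^k : v_i \cdot \vec{y} \in \mathbb{N}\ \forall i \in I,\ \text{and } i \mapsto v_i \cdot \vec{y} \text{ is injective on } I\bigr\}
\]
is a nonempty, at most countable subset of $\mathbb{N}^k$, and for each $\vec{y} \in \mathcal{Y}$ the image $S_{\vec{y}} := \{v_i \cdot \vec{y} : i \in I\}$ is an infinite subset of $\mathbb{N}$.

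Next I would carry out a standard diagonal construction. Enumerate $\mathcal{Y}$ as $\{\vec{y}_n\}_{n \geq 1}$; at stage $n$ pick two distinct previously-uncolored elements $a_n, b_n \in S_{\vec{y}_n}$ (possible since $S_{\vec{y}_n}$ is infinite while only finitely many elements have been used so far), and set $\chi(a_n) = 0$, $\chi(b_n) = 1$. Extend $\chi : \mathbb{N} \to \{0, 1\}$ arbitrarily on the remaining elements and let $C_j := \chi^{-1}(j)$ for $j \in \{0, 1\}$. By construction no $S_{\vec{y}_n}$ is monochromatic.

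To close the argument I would invoke the partition regularity of C-sets furnished by Theorem \ref{thm1.8}: any idempotent $p \in J(\mathbb{N})$ is an ultrafilter, so exactly one of $C_0, C_1$ belongs to $p$ and is therefore a C-set; without loss of generality $C_0$ is one. Applying strong C-image partition regularity to this C-set $C_0$ yields $\vec{x} \in \mathbb{N}^\omega$ with $A\vec{x} \in C_0^\omega$ and distinct entries at distinct rows; then $\vec{y} := (x_0, \ldots, x_{k-1}) \in \mathcal{Y}$ equals some $\vec{y}_n$, forcing $S_{\vec{y}_n} \subseteq C_0$. But by construction $b_n \in S_{\vec{y}_n} \cap C_1$, contradicting $C_0 \cap C_1 = \emptyset$. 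Hence $I$ is finite for every $k$. The main obstacle is recognising that the $I$-portion of $A\vec{x}$ is controlled by only $k$ coordinates of $\vec{x}$, thereby compressing the space of candidate obstructions to a countable family so that a single $2$-coloring suffices to block all of them simultaneously; once this observation is in hand, the partition regularity of the class of C-sets converts the coloring into the desired contradiction.
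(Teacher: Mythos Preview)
Your argument is correct and follows essentially the same strategy as the paper: assume the set $I$ is infinite, observe that the $I$-rows of $A\vec{x}$ depend only on the first $k$ coordinates of $\vec{x}$, enumerate the resulting countable family of candidate $\vec{y}\in\mathbb{N}^k$, and diagonally $2$-color $\mathbb{N}$ so that no candidate yields a monochromatic image; partition regularity of C-sets then furnishes the contradiction. The paper phrases the reduction slightly differently (it discards the rows outside $I$ and treats $A$ as an $\omega\times k$ matrix, enumerating the set $D=\{\vec{x}\in\mathbb{N}^k:\text{all entries of }A\vec{x}\text{ are distinct}\}$), but this is cosmetic; your version is in fact a bit more carefully worded, since you explicitly verify $\mathcal{Y}\neq\emptyset$ via $C=\mathbb{N}$ and explicitly invoke Theorem~\ref{thm1.8} to see that one color class is a C-set, points the paper leaves implicit.
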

\begin{proof}
Suppose that $\{i : \text{ for all }j \geqslant k, a_{i,j}= 0\}$ is finite. Then by discarding the other rows we may presume that $A$ is an $\omega \times k$ matrix. Let $D = \{\overset{\rightarrow}{x} \in \mathbb{N}^k : \text{ all entries of } A \overset{\rightarrow}{x} \text{ are distinct}\}$. Enumerate $D$ as $\langle \overset{\rightarrow}{x}^{(n)} \rangle_{n=1}^{\infty}$. Inductively choose distinct $y_n$ and $z_n$ in $A \overset{\rightarrow}{x}^{(n)}$ with $\{y_n, z_n\}\bigcap (\{y_t : t \in \{1,2, \cdots, n-1\}\} \bigcup \{z_t : t \in \{1,2, \cdots, n-1\}\}) \neq \phi$ if $n > 1$. Let $C = \{y_n : n \in \mathbb{N}\}$. Then there exists no $\overset{\rightarrow}{x} \in D$ with $A \overset{\rightarrow}{x} \in C^{\omega}$ and no $\overset{\rightarrow}{x} \in D$ with $A \overset{\rightarrow}{x} \in (\mathbb{N} \setminus  C)^{\omega}$.
\end{proof} 
\begin{theorem} \label{thA}
Let $p$ be an idempotent in $R$ where $R$ is a right ideal of $(\beta \mathbb{N}, +)$. Then for each $C \in p$, there are $2^c$ minimal idempotents in $R \bigcap \overline{C}$.
\end{theorem}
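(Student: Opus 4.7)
The plan is to build a compact right topological subsemigroup $W\subseteq R\cap\overline{C}$ containing $p$, and then to extract $2^c$ minimal idempotents of $\beta\mathbb{N}$ from it via standard algebraic arguments.

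First I would sharpen $C$ using the idempotent equation: set $C^{\star}=\{x\in C : -x+C\in p\}$. The ultrafilter calculation based on $p+p=p$ gives $C^{\star}\in p$ and $-x+C^{\star}\in p$ for every $x\in C^{\star}$. I would then form
\[
T=\overline{C^{\star}}\cap\bigcap_{x\in C^{\star}}\overline{-x+C^{\star}},
\]
which is closed, contains $p$, and is a subsemigroup of $\beta\mathbb{N}$: for $q,r\in T$ both $C^{\star}\in q+r$ and $-y+C^{\star}\in q+r$ (for $y\in C^{\star}$) follow by noting that the sets $\{x:-x+C^{\star}\in r\}\supseteq C^{\star}$ and $\{x:-(y+x)+C^{\star}\in r\}\supseteq -y+C^{\star}$ each lie in $q$. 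Setting $W=T\cap R$, the right-ideal property of $R$ gives $q+r\in T\cap(q+\beta\mathbb{N})\subseteq W$ whenever $q,r\in W$, so $W$ is a subsemigroup containing $p$, sitting inside $R\cap\overline{C}$. Replacing $R$ by $\overline{R}$ (still a right ideal) if necessary makes $W$ compact, and the minimal idempotents $q$ we ultimately produce will lie in the original $R$ because $q=p+q\in p+\beta\mathbb{N}\subseteq R$.

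The main task is then to show $W$ contains $2^c$ minimal idempotents of $\beta\mathbb{N}$. Two classical Hindman--Strauss facts drive this: first, when a closed subsemigroup $W\subseteq\beta\mathbb{N}$ meets $K(\beta\mathbb{N})$ one has $K(W)=W\cap K(\beta\mathbb{N})$, so the minimal idempotents of $W$ coincide with those minimal idempotents of $\beta\mathbb{N}$ that lie in $W$; second, $\beta\mathbb{N}$ has $2^c$ pairwise disjoint minimal left ideals. For each such minimal left ideal $L$, the intersection $L\cap T$ is a compact subsemigroup of $\beta\mathbb{N}$ (the intersection of a left ideal and a subsemigroup), so provided it is nonempty, Ellis's theorem yields an idempotent $e_L\in L\cap T$, automatically minimal in $\beta\mathbb{N}$ since $e_L\in L\subseteq K(\beta\mathbb{N})$. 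Distinct $L$'s yield distinct $e_L$'s (each minimal idempotent sits in a unique minimal left ideal), producing the required $2^c$ minimal idempotents of $\beta\mathbb{N}$ in $W\subseteq R\cap\overline{C}$.

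The principal obstacle is verifying $L\cap T\neq\emptyset$ for $2^c$ minimal left ideals $L$, i.e., ensuring that $T\cap K(\beta\mathbb{N})$ is suitably rich. The natural candidate to exhibit inside $L\cap T$ is $p+\ell$ for a well-chosen $\ell\in L$: membership in $L$ is automatic from the left-ideal property, while membership in $T$ reduces to the ultrafilter identities $C^{\star}\in p+\ell$ and $-y+C^{\star}\in p+\ell$ for $y\in C^{\star}$, both translating to statements of the form $\{x:(\cdots)\in\ell\}\in p$. The crucial lever here is the property $-x+C^{\star}\in p$ for $x\in C^{\star}$, and together with a careful choice of $\ell$ (for instance, the identity of an appropriate subgroup of $L$) these identities should hold; once this technical point is cleared, the $2^c$ count follows immediately from the cardinality of the set of minimal left ideals of $\beta\mathbb{N}$.
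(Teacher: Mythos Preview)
Your construction of the compact subsemigroup $T$ (and $W=T\cap R$) is fine, and the reduction to finding $2^c$ minimal left ideals $L$ with $L\cap T\neq\emptyset$ is the right shape. The gap is precisely the step you flag as the ``principal obstacle'': you never establish that $L\cap T$ is nonempty for even one minimal left ideal, let alone $2^c$ of them, and your proposed witness $p+\ell$ does not do the job. For $p+\ell\in T$ you need $C^\star\in p+\ell$, i.e.\ $\{x:-x+C^\star\in\ell\}\in p$; but the only information you have is $-x+C^\star\in p$ for $x\in C^\star$, which says nothing about membership in the unrelated ultrafilter $\ell$. Taking $\ell$ to be the identity of a subgroup of $L$ gives no leverage either, since a generic minimal idempotent has no reason to contain $C^\star$ or its translates. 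In fact $\overline{C}$ (and hence $T$) can miss many minimal left ideals entirely, so this is not a mere technicality that will yield to a careful choice of $\ell$.

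The paper handles exactly this point by a different mechanism. It enlarges your intersection by throwing in the sets $2^m\mathbb{N}$, producing a compact subsemigroup $V\subseteq\mathbb{H}=\bigcap_n\overline{2^n\mathbb{N}}$ with $p\in V$, and then invokes the structural fact (Theorem~6.32 of \cite{hindman1998algebra}) that such a $V$ contains an algebraic copy of $\mathbb{H}$. Inside that copy one finds a family $E$ of $2^c$ idempotents whose left translates $V+u$ are pairwise disjoint; for each $u\in E$ a minimal idempotent $\alpha_u$ of $V$ is then picked in $(p+V)\cap(V+u)$, giving $2^c$ distinct idempotents in $p+V\subseteq R$ and in $V\subseteq\overline{C}$. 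The essential idea you are missing is this passage through a copy of $\mathbb{H}$ (enabled by the extra $2^m\mathbb{N}$ constraints), which manufactures the required $2^c$ ``directions'' internally rather than trying to intersect $T$ with externally given minimal left ideals.
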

\begin{proof}
Let $C \in p$ and $C^* := \{x \in C : -x +C \in p\}$. Then notice that by Lemma $4.14$ of \cite{hindman1998algebra}, $-x + C^* \in p$, for each $x \in C^*$. For each $m \in \mathbb{N}$, $$S_m = 2^m\mathbb{N} \bigcap C^{*} \cap \bigcap \{-k +C^{*} : k \in C^{*} \cap \{1,2, \cdots, m\}\}.$$ Let $V = \bigcap_{m\in \mathbb{N}}\overline{S}_m$. For every $m \in \mathbb{N}$, $2^m \mathbb{N} \in p$ by Lemma $6.6$ of \cite{hindman1998algebra} and so $m \in p$. Thus $p \in V$. We show that $V$ is a subsemigroup of $\beta \mathbb{N}$, using Theorem $4.20$ of \cite{hindman1998algebra}. So, let $m \in \mathbb{N}$ and let $n \in S_m$. It is sufficient to show that $n + S_{m+n} \subseteq S_m$. Let $r \in S_{m+n}$. Obviously $n+r \in 2^m\mathbb{N}$. We have $n+r \in C^*$ because $n \in C^* \bigcap \{1,2, \cdots , m+n\}$. Let $k \in C^* \bigcap \{1,2, \cdots ,m\}$. Then $n \in -k+C^*$. So $k +n \in C^* \bigcap \{1,2, \cdots , m+n\}$ and thus $r \in -(k+n)+C^*$ so that $n+r \in -k + C^*$ as required. Since $p \in V$ we have by Theorem $6.32$ of \cite{hindman1998algebra} that $V$ contains a copy of $\mathbb{H} = \bigcap_{n=1}^{\infty}\overline{\mathbb{N}2^n}$. By Theorem $6.9$ of  \cite{hindman1998algebra}, $(\beta \mathbb{N}, +)$ has $2^c$ minimal left ideals. Thus there is a subset $W$ of $\beta \mathbb{N}$ containing idempotents such that $|W| = 2 ^c$. The subset $W$ will also have the property that whenever $u$ and $v$ are distinct members of $W$, $u + v =u$ and $v+u = v$. Following Lemma $6.6$ of \cite{hindman1998algebra}, $W \subseteq H$ and $V$ contains a copy of $\mathbb{H}$. Therefore we have a set $E \subseteq V$ of idempotents such that $|E| = 2^c$ and $u+v \neq u$ and $v+u \neq v$ for all distinct members $u$ and $v$ of $E$.

By Theorem $6.20$ of \cite{hindman1998algebra}, $(\beta \mathbb{N} +u) \bigcap  (\beta \mathbb{N} +v) = \phi$ whenever $u$ and $v$ are distinct members of $E$. So we can further say $(V+u)(V+v) = \phi$. For each $u \in E$ pick an idempotent $\alpha_u \in (p+V) \cap (V+u)$ with the property that $\alpha_u$ minimal in $V$.

By Corollary $2.6$ and Theorem $2.7$ of \cite{hindman1998algebra}, $p+V$ contains a minimal right ideal $R_1$ of $V$ and $V+u$ contains a minimal left ideal $L_1$ of $V$. Then, $R_1 \cap L_1$ is a group. Let $\alpha_u$ be the identity element of this group.) Then $\{\alpha_u : u \in E\}$ is a set of $2^c$ minimal ideampotents of $V$ in $p+V \subseteq R$.
\end{proof}
\begin{corollary} \label{cor:b}
Let $C$ be a C-set in $\mathbb{N}$. Then there exists a sequence $\langle C_n \rangle_{n=1}^{\infty}$ of pairwise disjoint C-sets in $\mathbb{N}$ such that $\bigcup_{n=1}^{\infty} C_i\subseteq C$.
\end{corollary}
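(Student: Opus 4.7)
The plan is to peel off one C-subset of $C$ at a time while preserving that the remaining portion is still a C-set, so the construction can continue. Since $C$ is a C-set, Theorem~\ref{thm1.8} furnishes an idempotent $p \in J(\mathbb{N})\cap \overline{C}$, and because $J(\mathbb{N})$ is a closed two-sided (in particular, right) ideal of $\beta\mathbb{N}$, Theorem~\ref{thA} applied with $R = J(\mathbb{N})$ delivers $2^c$ idempotents inside $J(\mathbb{N}) \cap \overline{C}$; in particular, at least two distinct ones at every stage where we invoke it.

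I would then recurse, maintaining the invariant that after step $n-1$ the leftover set $D_{n-1} := C \setminus (C_1 \cup \cdots \cup C_{n-1})$ is a C-set witnessed by some idempotent $q_{n-1} \in J(\mathbb{N}) \cap \overline{D_{n-1}}$ (with base case $D_0 = C$ and $q_0 = p$). At step $n$, apply Theorem~\ref{thA} to $q_{n-1}$ and the member $D_{n-1} \in q_{n-1}$ to obtain $2^c$ idempotents in $J(\mathbb{N}) \cap \overline{D_{n-1}}$, and pick two distinct ones $p_n \ne q_n$. Since $\beta\mathbb{N}$ is Hausdorff and distinct ultrafilters are separated by disjoint members, choose $A_n \in p_n$ and $B_n \in q_n$ with $A_n \cap B_n = \emptyset$, both contained in $D_{n-1}$. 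Set $C_n := A_n$. Then $C_n \subseteq C$ is disjoint from $C_1, \ldots, C_{n-1}$; the idempotent $p_n \in J(\mathbb{N}) \cap \overline{C_n}$ certifies via Theorem~\ref{thm1.8} that $C_n$ is a C-set; and $D_n = D_{n-1} \setminus C_n \supseteq B_n \in q_n$, so $q_n \in J(\mathbb{N}) \cap \overline{D_n}$ carries the invariant to the next stage. Iterating yields the desired pairwise disjoint sequence $\langle C_n \rangle_{n=1}^{\infty}$ of C-sets contained in $C$.

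The main subtlety, and the reason I am drawing on Theorem~\ref{thA}'s abundance of $2^c$ idempotents rather than on mere nonemptiness, is the need to extract \emph{two} idempotents per stage: the first produces the new C-set $C_n$, while the second is reserved to witness that the remainder $D_n$ is still a C-set, so that Theorem~\ref{thA} is available again at the next step. Without this precaution it would be conceivable that every idempotent of $J(\mathbb{N})$ lying over $D_{n-1}$ also lies over $C_n$, in which case no idempotent would remain over $D_n$ and the recursion would break down.
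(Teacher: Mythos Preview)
Your argument is correct and is precisely the ``alternative'' proof the paper sketches in parentheses: get two idempotents in $J(\mathbb{N})\cap\overline{C}$ via Theorem~\ref{thA}, separate them to split $C$ into a C-set $C_1$ and a C-remainder $D_1$, then repeat on $D_1$. The paper's one-line primary route is slightly different---it observes that the $2^c$ idempotents in $J(\mathbb{N})\cap\overline{C}$ contain an infinite strongly discrete family, yielding all the pairwise disjoint $C_n$'s in one shot---but your inductive version is exactly what the paper offers as the explicit alternative.
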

\begin{proof}
By Theorem \ref{thA}, the set of idempotents in $\overline{C} \cap J(\mathbb{N})$ is infinite. Therefore $\overline{C} \cap J(\mathbb{N})$ contains an infinite strongly discrete subset. (Alternatively, there are two idempotents in $\overline{C} \cap J(\mathbb{N})$ so that $C$ can be divided into C-sets $C_1$ and $D_1$. Then $D_1$ can again be divided into two C-sets, $C_2$ and $D_2$ and so on.)
\end{proof}
\begin{corollary}\label{cor:c}
For each $n \in \mathbb{N}$, let $A_n$ be a strongly C-image partition regular matrix. Then the matrix $$M = \begin{pmatrix}
A_1 & 0 & 0 & \cdots \\
0 & A_2 & 0 & \cdots \\
0 & 0 & A_3 & \cdots \\
\vdots & \vdots & \vdots & \ddots \\
\end{pmatrix}$$ is also strongly C-image partition regular. 
\end{corollary}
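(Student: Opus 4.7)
The plan is to decouple the diagonal blocks by partitioning the given C-set into countably many pairwise disjoint C-sets -- exactly the content of Corollary \ref{cor:b} -- and then to assemble a single input vector block by block from the strong image partition regularity of each $A_n$.

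First, given an arbitrary C-set $C \subseteq \mathbb{N}$, I would invoke Corollary \ref{cor:b} to obtain pairwise disjoint C-sets $\langle C_n \rangle_{n=1}^{\infty}$ with $\bigcup_{n=1}^{\infty} C_n \subseteq C$. For each $n$, since $A_n$ is strongly C-image partition regular, applying the definition to the C-set $C_n$ produces some $\vec{x}^{(n)} \in \mathbb{N}^{\omega}$ with $A_n \vec{x}^{(n)} \in C_n^{\omega}$ and the property that distinct rows of $A_n$ yield distinct entries of $A_n \vec{x}^{(n)}$. I would then define $\vec{x} \in \mathbb{N}^{\omega}$ as the concatenation of the $\vec{x}^{(n)}$'s matched to the block structure of $M$; consequently $M \vec{x}$ is the vertical stack of the vectors $A_n \vec{x}^{(n)}$, so every entry of $M \vec{x}$ lies in $\bigcup_{n=1}^{\infty} C_n \subseteq C$, giving $M \vec{x} \in C^{\omega}$.

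To verify the strong distinctness clause for $M$, take two distinct rows of $M$ indexing entries $y_i$ and $y_j$ of $M \vec{x}$. If both rows come from the same block $A_n$, the strong property of $A_n$ already forces $y_i \neq y_j$. If they come from different blocks $A_n$ and $A_m$ with $n \neq m$, then $y_i \in C_n$ and $y_j \in C_m$, and pairwise disjointness of the $C_n$'s gives $y_i \neq y_j$ automatically. The only genuine ingredient is Corollary \ref{cor:b}: without the availability of infinitely many pairwise disjoint C-subsets of $C$, cross-block collisions could not be ruled out, so that is where I expect the real content of the argument has been concentrated. Everything else is purely formal bookkeeping about the block-diagonal structure.
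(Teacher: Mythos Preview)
Your argument is correct and essentially identical to the paper's own proof: both apply Corollary~\ref{cor:b} to split $C$ into pairwise disjoint C-sets $C_n$, use the strong C-image partition regularity of each $A_n$ inside $C_n$, and concatenate the resulting vectors, with cross-block distinctness handled by the disjointness of the $C_n$'s. Your write-up actually spells out the distinctness verification a bit more explicitly than the paper does, but the approach is the same.
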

\begin{proof}
Let $C$ be a C-set. By Corollary \ref{cor:b}, choose a sequence $\langle C_n \rangle_{n=1}^{\infty}$ of pairwise disjoint C-sets in $\mathbb{N}$ such that $\bigcup_{n=1}^{\infty}C_n \subseteq C$. For each $n \in \mathbb{N}$ choose $\overset{\rightarrow}{x}^{(n)} \in \mathbb{N}^{\omega}$ such that $\overset{\rightarrow}{y}^{(n)} = A_n \overset{\rightarrow}{x}^{(n)} \in C_n^{\omega}$ and if row $i$ and $j$ of $A_n$ fails to be equal, then $\overset{\rightarrow}{y}_i^{(n)} \neq \overset{\rightarrow}{y}_j^{(n)}$. Let $$\overset{\rightarrow}{z} = \begin{pmatrix}
\overset{\rightarrow}{x}^{(1)} \\
\overset{\rightarrow}{x}^{(2)} \\
\vdots \\
\end{pmatrix}.$$ Then all entries of $M \overset{\rightarrow}{z}$ are in $C$ and entries from distinct rows of $M \overset{\rightarrow}{z}$ are unequal.

Surely, Corollary \ref{cor:c} remains valid if \enquote{strongly C-image partition regular} is replaced by \enquote{C-image partition regular}. The same proof applies and one does not need to introduce the pairwise disjoint C-sets, which were required to guarantee that the entries of $M\overset{\rightarrow}{z}$ from distinct rows were distinct. 
\end{proof}
Notice that trivially, if $A$ is an $\omega \times \omega$ matrix with entries from $\mathbb{Q}$ and there is some positive rational number $m$ such that each row of $A$ sums to $m$, then $A$ is centrally image partition regular. (Given a  central set $C$, simply pick $d \in \mathbb{N}$ such that $d_m \in C$, which one can do because for each $n \in \mathbb{N}$, $N_n$ is a member of every idempotent by Lemma $6.6$ of \cite{hindman1998algebra}. Then let $x_i = d$ for each $i \in \omega$.)
\begin{theorem}
Let $k \in \mathbb{N}$ and $m \in \mathbb{Q}$ such that $m > 0$. Let $A$ be an $\omega \times \omega$ with rational entries   such that 
\begin{enumerate}
\item the sum of each row of $A$ is $m$, and 
\item for each $l \in \omega, \{\langle a_{i,o}, a_{i,1},\cdots, a_{i,l}\rangle : i \in \omega\}$ is finite.
\end{enumerate}
Let $\overset{\rightarrow}{r}^{(1)}, \overset{\rightarrow}{r}^{(2)}, \cdots , \overset{\rightarrow}{r}^{(k)} \in \mathbb{Q} ^{\omega} \setminus \overset{\rightarrow}{\{0\}}$ such that each $\overset{\rightarrow}{r}^{(i)}$ has only finitely many non-zero  entries. Then there exist $b_1, b_2, \cdots ,b_k \in \mathbb{Q}\setminus\{0\}$  such that 
$$\begin{pmatrix}
b_1 \overset{\rightarrow}{r}^{(1)} \\
b_2 \overset{\rightarrow}{r}^{(2)} \\
\vdots \\
b_k \overset{\rightarrow}{r}^{(k)} \\
A \\
\end{pmatrix}$$ is C-image partition regular.
\end{theorem}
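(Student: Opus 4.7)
The plan is to mirror the proof of the analogous theorem for centrally image partition regular matrices (Hindman, Leader and Strauss, \cite{iprm2}), with the $C$-set Central Sets Theorem (Theorem \ref{thm1.4}, accessed via Theorem \ref{thm1.8}) replacing the classical minimal-idempotent version. I would choose the scalars $b_1, \ldots, b_k$ so that the $k$ added rows form a finite first entries matrix, and then use the CST to extend a solution of that finite system to a solution for the whole of $A$.

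Concretely, let $N$ be an integer so that every $\vec r^{(j)}$ is supported in $\{0, \ldots, N-1\}$, set $s_j = \min\{t : r^{(j)}_t \neq 0\}$, pick a positive rational $t_s$ for each $s \in \{s_1, \ldots, s_k\}$, and put $b_j = t_{s_j}/r^{(j)}_{s_j}$. This makes the $k \times N$ block $\tilde R$ with rows $b_j \vec r^{(j)}|_N$ a first entries matrix. Given a $C$-set $C$, I would pick an idempotent $p \in \overline{C} \cap J(\mathbb N)$ using Theorem \ref{thm1.8}, and set $C^* = \{x \in C : -x + C \in p\}$, which lies in $p$ and satisfies $-x + C^* \in p$ for every $x \in C^*$ (Lemma 4.14 of \cite{hindman1998algebra}).

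By condition (2), the set $F_{N-1}$ of distinct initial segments of rows of $A$ is finite; enumerate it as $\vec v^{(1)}, \ldots, \vec v^{(s)}$. I would augment $\tilde R$ by the rows $c_l \vec v^{(l)}$ with $c_l = t_{j^{(l)}}/v^{(l)}_{j^{(l)}}$ (where $j^{(l)} = \min\{t : v^{(l)}_t \neq 0\}$), chosen so that the combined $(k+s) \times N$ matrix stays a first entries matrix. Applying Theorem \ref{thm1.10} to this augmented finite matrix then produces $(x_0, \ldots, x_{N-1}) \in \mathbb N^N$ with $\tilde R (x_0, \ldots, x_{N-1})^T \in (C^*)^k$ and $c_l L_l \in C^*$ for every $l$, where $L_l := \sum_{t<N} v^{(l)}_t x_t$. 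For each row $i$ of $A$, $\vec a^{(i)} \cdot \vec x = L_{l(i)} + \sum_{t \geq N} a_{i,t} x_t$ with the tail-sum coefficient $m - \sigma^{(l(i))}$ depending only on the initial row-type $l(i)$ by condition (1). The final step is to use the $C$-set Central Sets Theorem output $(\alpha, H)$ for $p$, feeding in, at each level $M \geq N$, the finite collection of sequences that condition (2) provides, so as to construct $x_N, x_{N+1}, \ldots$ for which each tail $\sum_{t \geq N} a_{i,t} x_t$ lies in a $p$-large subset of $-L_{l(i)} + C$.

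The hard part will be that final step: accommodating the infinitely many row constraints of $A$ by a Central Sets Theorem application whose input is finite at every stage. Condition (2) is essential --- it is exactly what bounds the row-types at each truncation depth, keeping the input to $(\alpha, H)$ in $\mathcal P_f(\tau)$. Condition (1) plays a complementary role, making the translate $-L_{l(i)} + C$ depend only on the row-type and so uniform across rows of that type. A delicate technical point is the passage from the multiplicatively scaled relation $c_l L_l \in C^*$ to membership of the additive translate $-L_{l(i)} + C$ in $p$; this should go through because every idempotent of $\beta \mathbb N$ automatically contains every subsemigroup $n\mathbb N$, which lets one absorb the scaling factors $c_l$ into the CST-driven tail.
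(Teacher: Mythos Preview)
Your approach diverges substantially from the paper's, and you are missing the key simplification that makes this theorem easy. The paper does \emph{not} invoke the Central Sets Theorem at all. Instead, having chosen $l$ so that every $\vec r^{(j)}$ is supported in $\{0,\dots,l-1\}$, it enumerates the finitely many initial segments $\vec w^{(0)},\dots,\vec w^{(u)}$ of the rows of $A$ (condition (2)), sets $d_t = m - \sum_{j<l} w^{(t)}_j$, and forms the $(u+1)\times(l+1)$ matrix $E$ whose $t$-th row is $\vec w^{(t)}\,\frown\, d_t$. Because every row of $E$ sums to $m$, $E$ is image partition regular; applying the characterisation of finite image partition regularity $k$ times gives $b_1,\dots,b_k$ so that stacking the $b_j\vec s^{(j)}$ on top of $E$ stays image partition regular. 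Theorem~\ref{thm1.10} then yields $(z_0,\dots,z_l)$ with all entries in $C$, and one simply sets $x_n=z_n$ for $n<l$ and $x_n=z_l$ for all $n\ge l$. The constant row sum (condition (1)) makes the tail $\sum_{t\ge l}a_{i,t}x_t$ collapse to $(m-\sum_{j<l}w^{(t)}_j)z_l=d_t z_l$, so each row of $A$ lands exactly on a row of $E\vec z\in C^{u+1}$. No infinite construction is needed.

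Your sketch, by contrast, tries to build the tail $(x_N,x_{N+1},\dots)$ via the CST, and this runs into genuine problems. First, your augmented matrix may fail to be first entries: some initial segments $\vec v^{(l)}$ of rows of $A$ may be $\vec 0$, and even when they are not their first nonzero columns can collide with those of $\tilde R$ with incompatible values. Second, and more seriously, the information $c_l L_l\in C^*$ gives you $-c_l L_l + C\in p$, not $-L_l + C\in p$; your proposed fix via $n\mathbb N\in p$ does not bridge that gap. Third, the tail $\sum_{t\ge N}a_{i,t}x_t$ depends on all the individual entries $a_{i,t}$, not merely on their sum $m-\sigma^{(l(i))}$, so two rows of $A$ sharing the same initial type can have entirely different tails --- unless all tail variables are equal, which is precisely the shortcut the paper takes and you do not. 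The CST application you gesture at would have to control infinitely many distinct tail-sequences simultaneously; condition~(2) bounds the number of row-types at each finite depth but not over all of $\omega$, so it is not clear what sequences you would feed into $(\alpha,H)$ or why the output would match every row of $A$.
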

\begin{proof}
Pick $l \in \mathbb{N}$ such that $r_i^{(j)} = 0$, for every $j \in \{1,2, \cdots k\}$ and every $i \geqslant l$. Let $s^{(j)} = \langle r_0^{(j)}, r_1^{(j)}, \cdots , r_l^{(j)} \rangle$, for $j \in \{1,2, \cdots k\}$. Enumerate $$\{\langle a_{i,o}, a_{i,1},\cdots, a_{i,l}\rangle : i \in \omega\}$$ as $\overset{\rightarrow}{w}^{(0)}, \overset{\rightarrow}{w}^{(1)}, \cdots , \overset{\rightarrow}{w}^{(u)}$. Let $d_i = m - \sum_{j=0}^{l-1} w_j^{(i)}$, for $i \in \{1,2, \cdots, u\}$. Let $E$ be the $(u+1)\times (l+1)$ matrix with entries 
\[
    e_{i,j}= 
\begin{cases}
    w_j^{(i)},& \text{if } j \in \{0,1, \cdots , l-1\}\\
    d_i,              & \text{if } j=l.
\end{cases}
\]
Then $E$ is image partition regular because $E$ has constant row sums. By applying Theorem $1.2(d)$ \cite{iprm2} $u+1$ times, pick $b_1, b_2, \cdots , b_k \in \mathbb{Q} \setminus \{0\}$ such that the matrix 
$$\begin{pmatrix}
b_1 \overset{\rightarrow}{s}^{(1)} \\
b_2 \overset{\rightarrow}{s}^{(2)} \\
\vdots \\
b_k \overset{\rightarrow}{s}^{(k)} \\
E \\
\end{pmatrix}$$
is image partition regulr. Hence by Theorem $1.2(b)$ of \cite{iprm2} the above matrix is image partition regular.

Let $C$ be a C-set and pick $\langle z_0, z_1, \cdots , z_l \rangle \in \mathbb{N}^{l+1}$ with $H\overset{\rightarrow}{z} \in C^{u+1}$. For $n \in \{0, 1, \cdots , l-1\}$, let $x_n =z_n$. For $n \in \{l, l+1, l+2, \cdots\}$, let $x_n = z_l$. Consequently, 
$$\begin{pmatrix}
b_1 \overset{\rightarrow}{r}^{(1)} \\
b_2 \overset{\rightarrow}{r}^{(2)} \\
\vdots \\
b_k \overset{\rightarrow}{r}^{(k)} \\
A \\
\end{pmatrix} \overset{\rightarrow}{x} \in C^{\omega}.$$
\end{proof}
\section{Some classes of C-image partition regular matrices} \label{sec:s:s}
We know that an extension of \enquote{first entries matrix} to infinite matrices does not essentially produce image partition regular matrices. Therefore we introduce a sparse version of the notion of first entries matrix which is studied quite elaborately in this section.

Firstly we recall the following definition which is definition $3.1$ in \cite{iprm2000hindman}.
\begin{definition}
Let A be an $\omega \times \omega$ matrix with rational entries. Then A is said to be a \textit{segmented
image partition regular matrix} if and only if
\begin{enumerate}
\item $A$ contains no row as $\overset{\rightarrow}{0}$;
\item the set $\{j \in \omega : a_{i,j} \neq 0\}$ is finite, for each $i \in \omega$; and
\item there is an increasing sequence $\langle \alpha_n \rangle_{n=0}^{\infty}$ with elements from $\omega$ satisfying  $\alpha_0 = 0$ and for each n $\in \omega$,
$$\{\langle a_{i,\alpha_n}, a_{i,\alpha_n+1}, a_{i,\alpha_n+2}, \cdots , a_{i,\alpha_{n+1}−1} \rangle : i \in \omega\} \setminus \{\overset{\rightarrow}{0}\}$$
is either empty or forms the set of rows of a finite image partition regular matrix.
\end{enumerate}
We shall say that A is a \textit{segmented first entries matrix} if each of these finite image partition regular matrices is a first entries matrix. Moreover A is said to be a \textit{monic segmented first entries matrix} if in addition  the first non-zero entry of each $\langle a_{i,\alpha_n}, a_{i,\alpha_{n+1}}, a_{i,\alpha_{n+2}}, \cdots , a_{i,\alpha_{n+1−1}} \rangle$, if any, is 1.
\end{definition} 
The most celebrated example of segmented first entries matrices are the finite sums matrix which generates the $(\mathcal{M},\mathcal{P},\mathcal{C})$-systems of \cite{prmpcs1993hindman}.
\begin{theorem}\label{thm3.2}
Any segmented image partition regular matrix is strongly C-image partition regular.
\end{theorem}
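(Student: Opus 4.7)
The plan is to mirror the argument for centrally image partition regular matrices in \cite{iprm2}, replacing central sets by C-sets and minimal idempotents in $K(\beta\mathbb{N})$ by idempotents of $J(\mathbb{N})$. Given a C-set $C$, by Theorem \ref{thm1.8} fix an idempotent $p \in J(\mathbb{N}) \cap \overline{C}$. For each $D \in p$, setting $D^{\star} := \{y \in D : -y + D \in p\}$, Lemma $4.14$ of \cite{hindman1998algebra} gives $D^{\star} \in p$ and $-y + D^{\star} \in p$ for every $y \in D^{\star}$. Iterating produces a descending tower $C \supseteq C^{\star} \supseteq C^{\star\star} \supseteq \cdots$ of C-sets belonging to $p$, with the usual IP-sum closure: any finite sum of elements chosen successively from deeper and deeper layers lies back in $C$.

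I would construct $\vec{x}$ by induction on the column segments $[\alpha_n, \alpha_{n+1})$. With $x^{(0)}, \ldots, x^{(n-1)}$ already chosen, the partial sum $s_n^{(i)} := \sum_{m<n} \langle a_{i,\alpha_m}, \ldots, a_{i,\alpha_{m+1}-1}\rangle \cdot x^{(m)}$ of row $i$ depends only on the finite tuple of that row's restriction vectors in segments $0, \ldots, n-1$; since the definition of a segmented matrix allows only finitely many distinct nonzero restrictions per segment, only finitely many distinct values $s = s_n^{(i)}$ arise across all rows. Thus the set $D_n := \bigcap_{s}\bigl(-s + C^{\star\cdots\star}\bigr)$, taken over these finitely many $s$ and iterated deeply enough in the tower to accommodate all future segments where any currently active row is still nonzero, remains in $p$. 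Applying Theorem \ref{thm1.10} to the finite image partition regular matrix $M_n$ whose rows are the distinct nonzero restrictions in segment $n$, with target C-set $D_n$, yields $x^{(n)}$ such that $v \cdot x^{(n)} \in D_n$ for every such restriction $v$.

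By the IP-sum closure of the tower, each $(A\vec{x})_i$ is a finite sum of elements drawn one from each successively deeper layer along the support of row $i$, and therefore lies in $C$. For the strong version, first apply Corollary \ref{cor:b} to split $C$ into pairwise disjoint C-subsets and carry out the construction inside one of them; then at each stage additionally restrict $x^{(n)}$ by finitely many inequality constraints so that distinct rows of $A$ receive distinct entries, in the spirit of the proof of Corollary \ref{cor:c}.

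The main obstacle will be verifying that $D_n \in p$ at every stage of the induction. This hinges crucially on the finite-restrictions clause in the definition of a segmented matrix, which keeps the set of partial sums $s$ finite at each stage, so that $D_n$ is an intersection of finitely many sets in $p$ and hence lies in $p$. Without that hypothesis the intersection could fall outside $p$ and the induction would break down.
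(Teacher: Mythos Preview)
Your inductive construction on column segments, using an idempotent $p \in J(\mathbb{N})\cap\overline{C}$ and the set $C^*$, is exactly the paper's route. One simplification: the tower $C \supseteq C^* \supseteq C^{**}\supseteq\cdots$ is unnecessary, since Lemma~4.14 of \cite{hindman1998algebra} already gives $-y+C^*\in p$ for every $y\in C^*$, whence $(C^*)^*=C^*$ and the tower stabilizes immediately. The paper simply maintains the invariant that every nonzero partial sum lies in $C^*$; the target set for the next segment is then the finite intersection $\bigcap_s(-s+C^*)\in p$ (finiteness coming from the segmented hypothesis, exactly as you note), and each updated partial sum lands back in $C^*$.

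Your treatment of the \emph{strong} part, however, has a gap. Passing via Corollary~\ref{cor:b} to a C-subset of $C$ accomplishes nothing here --- that device separates entries coming from different diagonal blocks, as in Corollary~\ref{cor:c}, not entries of a single segmented matrix. And ``finitely many inequality constraints'' at stage $n$ understates the difficulty: if rows $j,k$ differ both in their $B_{m}$-part and in their $A_{m+1}$-part, you need $y_j+z_j\neq y_k+z_k$, a linear condition on $x^{(m+1)}$ cutting out an affine hyperplane rather than a finite set of points. The paper's fix is clean: at stage $m+1$ set $l=1+\max_i y_i$ and additionally require each nonzero $z_j$ to lie in $\mathbb{N}l$ (possible since $\mathbb{N}l\in p$). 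Then distinct nonzero $A_{m+1}$-rows give $|z_j-z_k|\geq l>|y_j-y_k|$, forcing $y_j+z_j\neq y_k+z_k$; equal $A_{m+1}$-rows inherit distinctness from the inductive hypothesis on $B_m$. The correct invariant to carry is that partial sums for distinct rows of $B_m$ are already distinct at every stage, and the $\mathbb{N}l$ device is what makes this propagate.
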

\begin{proof}
Let $\overset{\rightarrow}{c_0},\overset{\rightarrow}{c_1}, \overset{\rightarrow}{c_2}, \cdots$ denote the columns of a segmented image partition regular matrix $A$ and choose $\langle \alpha_n \rangle_{n=0}^{\infty}$ according to the definition of a segmented image partition regular matrix. Suppose $A_n$  is the matrix containing columns $\overset{\rightarrow}{c}_{\alpha_n}, \overset{\rightarrow}{c}_{{\alpha_n}+1}, \cdots , \overset{\rightarrow}{c}_{{\alpha_{n+1}-1}}$ for each $n \in \omega$. Then the set of non-zero rows of $A_n$ is finite and if it is non-empty then it is the set of rows of a finite image partition regular matrix. Let $B_n = (A_0~A_1~\cdots~A_n)$.

Take a C-set $C$ of $\mathbb{N}$. By Theorem \ref{thm1.8} choose an idempotent $p \in J(\mathbb{N})$ with $C \in p$. Let $C^* = {n \in C : −n + C \in p}$. Then $C^* \in p$ and $−n + C^* \in p$ for every $n \in C^*$ by [9, Lemma 4.14].

By Theorem \ref{thm1.10}, we can choose $\overset{\rightarrow}{x}^{(0)} \in \mathbb{N}^{\alpha_1 - \alpha_0}$ with the property that, if $\overset{\rightarrow}{y} = A_0 \overset{\rightarrow}{x}^{(0)}$, then $y_i \in C^*$ for every $i \in \omega$ with the $i$-th row of $A_0$ is non-zero, and entries of $\overset{\rightarrow}{y}$ which correspond to unequal rows of $A_0$ are distinct.

We now make the inductive assumption that, for some $m \in \omega$, we have chosen $\overset{\rightarrow}{x}^{(0)}, \overset{\rightarrow}{x}^{(1)}, \cdots , \overset{\rightarrow}{x}^{(m)}$ such that $\overset{\rightarrow}{x}^{(i)} \in \mathbb{N}^{\alpha_{i+1}-\alpha_i}$ for every $i \in \{0, 1, 2, \cdots ,m\}$, and, if $\overset{\rightarrow}{y} = B_m (\overset{\rightarrow}{x}^{(0)}, \overset{\rightarrow}{x}^{(1)}, \cdots , \overset{\rightarrow}{x}^{(m)})^t$
, then $y_j \in C^*$ for every $j \in \omega$ for which the $j$-th row of $B_m$ is non-zero and \enquote{t} denotes the matrix transpose. We further suppose that entries of $\overset{\rightarrow}{y}$ which correspond to unequal rows of $B_m$ are distinct.

Let $D = \{j \in \omega : \text{ row }j\text{ of }B_m+1\text{ is not }\overset{\rightarrow}{0}\}$. It follows that for each $j \in \omega$, $−y_j+C^* \in p$. (Either $y_j = 0$ or $y_j \in C^*$.) Let $l = \max\{y_i : i \in \omega\} + 1$ and note that $\mathbb{N}l \in p$ by [\cite{hindman1998algebra}, Lemma 6.6]. Thus by Theorem \ref{thm1.10}, we can choose $\overset{\rightarrow}{x}^{(m+1)} \in \mathbb{N}^{\alpha_{m+2}−\alpha_{m+1}}$ such that, if
$\overset{\rightarrow}{z} = A_{m+1}\overset{\rightarrow}{x}^{(m+1)}$, then $z_j \in \mathbb{N}l \setminus \cap \bigcap_{t \in D} (−y_t+C^*)$ for every $j \in D$, and $z_j \neq z_k$ whenever rows $j$ and $k$ of $A_{m+1}$ are distinct and not equal to $\overset{\rightarrow}{0}$. Since each $z_j \in \mathbb{N}l$, we also get that $y_j + z_j \neq y_k + z_k$ whenever $j$, $k$ $\in D$ and distinct rows $j$ and $k$ of $B_{m+1}$.

Thus we can choose an infinite sequence $\langle \overset{\rightarrow}{x}^{(i)}\rangle_{i \in \omega}$ with the property that for every $i \in \omega$, $\overset{\rightarrow}{x}^{(i)} \in \mathbb{N}^{\alpha_{i+1}-\alpha_i}$, and, if $\overset{\rightarrow}{y} = B_i (\overset{\rightarrow}{x}^{(0)}, \overset{\rightarrow}{x}^{(1)}, \cdots , \overset{\rightarrow}{x}^{(i)})^t$, then $y_j \in C^*$ for every $j \in \omega$ for which the $j$-th row of $B_i$ is non-zero. Moreover, entries of $\overset{\rightarrow}{y}$ corresponding to distinct rows of $B_i$ are distinct.

Let $\overset{\rightarrow}{y} = A\overset{\rightarrow}{x}$ where $\overset{\rightarrow}{x} = (\overset{\rightarrow}{x}^{(0)}, \overset{\rightarrow}{x}^{(1)}, \overset{\rightarrow}{x}^{(3)}, \cdots )^t$. We note that, for every $j \in \omega$ and for $i > m$, there exists $m \in \omega$ such that $y_j$ is the $j$-th entry of $B_i(\overset{\rightarrow}{x}^{(0)}, \overset{\rightarrow}{x}^{(1)}, \overset{\rightarrow}{x}^{(3)}, \cdots , \overset{\rightarrow}{x}^{(i)})^t$. Thus all the entries of $\overset{\rightarrow}{y}$ are in $C^*$ and entries corresponding to distinct rows are distinct.
\end{proof}

Now we recall the following definition which is Definition $4.1$ in \cite{iprm2000hindman}.
\begin{definition} An $\omega \times \omega$ matrix $A$ is said to be a \textit{restricted triangular matrix} if
and only if all entries of $A$ are from $\mathbb{Z}$ and there exist $d \in \mathbb{N}$ and an increasing function $j : \omega \to \omega$ such that for all $i \in \omega$,
\begin{enumerate}
\item $a_{i, j(i)} \in \{1, 2, \cdots , d\}$,
\item $a_{i,l} = 0$, whenever $l > j(i)$, and
\item for all $k > i$ and all $t \in \{1, 2, \cdots , d\}$, $t|_{a_{k,j(i)}}$.
\end{enumerate}
\end{definition}
\begin{theorem}
A restricted triangular matrix $A$ is strongly C-image partition regular. In particular, if $p \in \bigcap_{ n \in \mathbb{N}} cl_{\beta \mathbb{N}}(n\mathbb{N})$ and $P \in p$, then there exists $\overset{\rightarrow}{x} \in \mathbb{N}^{\omega}$ such that the entries of $A \overset{\rightarrow}{x}$ are distinct elements of $P$.
\end{theorem}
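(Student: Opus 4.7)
The plan is to prove the \enquote{in particular} assertion directly and deduce strong $C$-image partition regularity as a consequence. Given any $C$-set $C\subseteq\mathbb{N}$, Theorem \ref{thm1.8} supplies an idempotent $p\in J(\mathbb{N})$ with $C\in p$, and Lemma $6.6$ of \cite{hindman1998algebra} (used repeatedly in this paper) forces every idempotent of $\beta\mathbb{N}$ to lie in $\bigcap_{n\in\mathbb{N}} cl_{\beta\mathbb{N}}(n\mathbb{N})$; applying the \enquote{in particular} statement with $P=C$ then delivers the strong $C$-image partition regularity.

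For the explicit version, I would fix $p\in\bigcap_{n\in\mathbb{N}} cl_{\beta\mathbb{N}}(n\mathbb{N})$ and $P\in p$, and note that $p$ is non-principal and $n\mathbb{N}\in p$ for every $n\in\mathbb{N}$. First, I pre-assign $x_l:=d!$ for every $l\in\omega$ not of the form $j(i)$, and then choose $x_{j(0)},x_{j(1)},x_{j(2)},\ldots$ recursively so that $y_n:=(A\overset{\rightarrow}{x})_n$ forms a strictly increasing sequence in $P$.

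The engine of the induction is the decomposition obtained from conditions (1) and (2) by splitting $y_n=\sum_{l\leq j(n)}a_{n,l}x_l$ according to whether $l\in j(\omega)$:
\[
y_n=c_n+a_{n,j(n)}x_{j(n)},\qquad c_n:=d!\!\!\sum_{\substack{l\leq j(n)\\ l\notin j(\omega)}}a_{n,l}+\sum_{k=0}^{n-1}a_{n,j(k)}x_{j(k)}.
\]
Condition (3) makes each $a_{n,j(k)}$ with $k<n$ divisible by $d!$, so both pieces of $c_n$ lie in $d!\,\mathbb{Z}$ and hence in $a_{n,j(n)}\mathbb{Z}$ (as $a_{n,j(n)}\leq d$). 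Consequently $c_n+a_{n,j(n)}\mathbb{N}$ differs from $a_{n,j(n)}\mathbb{N}\in p$ by finitely many points and therefore lies in $p$. Intersecting with $P$ and with $\{z:z>\max(y_0,\ldots,y_{n-1},c_n)\}$ yields a non-empty member of $p$, from which I pick $y_n$ and set $x_{j(n)}:=(y_n-c_n)/a_{n,j(n)}\in\mathbb{N}$. Since $j$ is strictly increasing, distinct rows of $A$ are separated by the column of their last nonzero entry, so the strict monotonicity of $(y_n)$ supplies the distinctness required for strong $C$-image partition regularity.

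The hard part is handling columns $l\notin j(\omega)$, about which the definition of a restricted triangular matrix gives no divisibility information; the pre-assignment $x_l=d!$ is exactly what is needed to compensate, forcing the contribution of those columns in every row into $d!\,\mathbb{Z}$, which is absorbed by the divisibility of $a_{n,j(n)}$. Beyond this trick, the induction parallels the proof of Theorem \ref{thm3.2}, except that neither idempotency of $p$ nor passage to $C^{\ast}$ is required: the weaker hypothesis $p\in\bigcap_{n} cl_{\beta\mathbb{N}}(n\mathbb{N})$ already suffices.
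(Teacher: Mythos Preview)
Your argument is correct and is precisely the standard one: the paper does not give a self-contained proof but defers to Theorem~$4.2$ of \cite{iprm2000hindman}, and what you have written is exactly that argument (the pre-assignment $x_l=d!$ for columns $l\notin j(\omega)$ is also invoked later in this paper, in the proof of Theorem~3.7). One small imprecision: condition~(3) only guarantees that each $a_{n,j(k)}$ with $k<n$ is divisible by every $t\in\{1,\dots,d\}$, i.e.\ by $\operatorname{lcm}(1,\dots,d)$, not by $d!$; so the second summand of $c_n$ need not lie in $d!\,\mathbb{Z}$. This does not affect your proof, since what you actually use is $c_n\in a_{n,j(n)}\mathbb{Z}$, and both $\operatorname{lcm}(1,\dots,d)$ and $d!$ are multiples of $a_{n,j(n)}\in\{1,\dots,d\}$.
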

\begin{proof}
The proof is essentially done in the proof of Theorem $4.2$ in \cite{iprm2000hindman}. 
\end{proof}
\begin{corollary}
Let $A$ be an $\omega \times \omega$ matrix with entries from $\mathbb{Z}$. Suppose exists an increasing function $j : \omega \to \omega$ such that for all $i \in \omega$ the following properties are satisfied:
\begin{enumerate}
\item $a_{i,j(i)} = 1$ and
\item $a_i,l = 0$ for all $l > j(i)$.
\end{enumerate}
Then $A$ is strongly C-image partition regular.
\end{corollary}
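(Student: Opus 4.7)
The plan is to observe that the hypotheses of the corollary are simply the hypotheses of a restricted triangular matrix in the degenerate case $d = 1$, and then quote the previous theorem.

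More precisely, I would set $d = 1$ and verify the three clauses of the definition of a restricted triangular matrix directly. Clause (1) of that definition asks that $a_{i,j(i)} \in \{1, 2, \ldots, d\} = \{1\}$, which matches hypothesis (1) of the corollary, namely $a_{i,j(i)} = 1$. Clause (2) asks that $a_{i,l} = 0$ whenever $l > j(i)$, which is hypothesis (2) of the corollary verbatim. Clause (3) demands that for all $k > i$ and all $t \in \{1,2,\ldots,d\} = \{1\}$, one has $t \mid a_{k,j(i)}$; but this reduces to $1 \mid a_{k,j(i)}$, which is trivially true for any integer. Since the entries of $A$ are from $\mathbb{Z}$ by assumption, $A$ qualifies as a restricted triangular matrix.

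Therefore, by the preceding theorem (restricted triangular matrices are strongly C-image partition regular), $A$ is strongly C-image partition regular, which is exactly the conclusion. There is no real obstacle here: the statement is a clean specialization, and the only thing to confirm is that the divisibility clause collapses trivially when $d = 1$, making the assumption about $j$ being increasing and the lower-triangular shape of $A$ (implicit in clause (2)) the only substantive content.
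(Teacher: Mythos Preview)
Your proposal is correct and matches the paper's own proof exactly: the paper simply observes that $A$ is a restricted triangular matrix with $d = 1$ and invokes the preceding theorem. Your explicit verification that clause~(3) trivializes when $d = 1$ is a useful elaboration of what the paper leaves implicit.
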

\begin{proof}
The corollary is immediate because $A$ is a restricted triangular matrix with $d = 1$.
\end{proof}
\begin{corollary}
Let $A$ be an $\omega \times \omega$ matrix with entries from $\mathbb{Z}$. In addition suppose $A$ contains only finitely many non-zero entries in each row. Suppose there exist $d \in \mathbb{N}$ and a function $j : \omega \to \omega$ such that for all $i \in \omega$, the following are satisfied:
\begin{enumerate}
\item $a_{i,j(i)} \in \{1, 2, \cdots , d\}$ and
\item $a_{k,j(i)} = 0$ for all $k \neq i$.
\end{enumerate}
Then A is strongly C-image partition regular.
\end{corollary}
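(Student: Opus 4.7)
The plan is to exploit condition~(2) to decouple the rows. First I would observe that $j$ is injective: if $j(i_1)=j(i_2)=l$ with $i_1\neq i_2$, then condition~(2) applied at $i_1$ yields $a_{i_2,l}=0$, contradicting condition~(1) at $i_2$. More generally, for any $k\neq i$ one has $a_{i,j(k)}=0$, so every non-zero entry of row $i$ sits either at column $j(i)$ or at some column in $\omega\setminus j(\omega)$. In particular, all rows of $A$ are pairwise distinct (equal rows $i\neq k$ would force $a_{k,j(i)}=a_{i,j(i)}\neq 0$), and once we commit to the variables $x_l$ for $l\in\omega\setminus j(\omega)$, row $i$'s image depends only on $x_{j(i)}$.

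Given a C-set $C$, I would invoke Corollary~\ref{cor:b} to obtain a sequence $\langle C_i \rangle_{i\in\omega}$ of pairwise disjoint C-sets with $\bigcup_i C_i \subseteq C$; routing the $i$-th image into $C_i$ will automatically handle the distinctness clause. Set $L:=\mathrm{lcm}(1,2,\dots,d)$ and define $x_l:=L$ for every $l\in\omega\setminus j(\omega)$. Writing $a_i:=a_{i,j(i)}$ and $c_i:=L\sum_{l\in\omega\setminus j(\omega)}a_{i,l}$ (a finite sum, by the finite-support hypothesis), one has
$$
(A\overset{\rightarrow}{x})_i \;=\; a_i\,x_{j(i)}+c_i,
$$
and since $a_i\mid L\mid c_i$ the congruence $c_i\equiv 0\pmod{a_i}$ holds for every $i$.

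To choose $x_{j(i)}$, I would pick an idempotent $p_i\in J(\mathbb{N})$ with $C_i\in p_i$ via Theorem~\ref{thm1.8}. Because $a_i\mathbb{N}\in p_i$ by Lemma~6.6 of \cite{hindman1998algebra}, the set $C_i\cap a_i\mathbb{N}$ lies in $p_i$ and is infinite; pick $y_i\in C_i\cap a_i\mathbb{N}$ with $y_i\geq c_i+a_i$ and put $x_{j(i)}:=(y_i-c_i)/a_i\in\mathbb{N}$. Then $(A\overset{\rightarrow}{x})_i=y_i\in C_i\subseteq C$, and the $y_i$'s are distinct by pairwise disjointness of the $C_i$'s, giving strong C-image partition regularity. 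The one non-routine step is the use of $L=\mathrm{lcm}(1,\dots,d)$ to clear the residue-class obstruction (without it, one would need $C$ to meet each class $c_i\pmod{a_i}$, which fails for instance when $C=2\mathbb{N}$); once this trick is spotted, no induction or block decomposition is needed.
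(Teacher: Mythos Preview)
Your argument is correct. You exploit condition~(2) to decouple the rows completely: after freezing the variables outside $j(\omega)$ at the single value $L=\mathrm{lcm}(1,\dots,d)$, each image $(A\overset{\rightarrow}{x})_i$ becomes an affine function of $x_{j(i)}$ alone, and the divisibility $a_i\mid L$ kills the residue obstruction so that $x_{j(i)}=(y_i-c_i)/a_i$ lands in $\mathbb{N}$. Routing $y_i$ into the disjoint C-set $C_i$ via Corollary~\ref{cor:b} then handles both membership in $C$ and distinctness in one stroke. All the ingredients you invoke (Theorem~\ref{thm1.8} for the idempotent $p_i$, Lemma~6.6 of \cite{hindman1998algebra} for $a_i\mathbb{N}\in p_i$, and the fact that idempotents in $\beta\mathbb{N}$ are free so $C_i\cap a_i\mathbb{N}$ is infinite) are available in the paper.

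The paper takes a different, structural route: rather than solving row by row, it observes that a suitable permutation of the columns turns $A$ into a restricted triangular matrix, after which the previous theorem applies. Concretely, one interleaves the ``pivot'' columns $j(0),j(1),\dots$ with the finitely many off-pivot columns supporting each successive row, so that in the new ordering every non-zero entry of row $i$ lies at or before the new position of $j(i)$; condition~(3) of the restricted-triangular definition is then automatic because $a_{k,j(i)}=0$ for $k\neq i$. This approach is shorter to state but leans on the (externally proved) restricted-triangular theorem. Your approach is more self-contained and makes the mechanism transparent; the price is the appeal to Corollary~\ref{cor:b} for disjoint C-sets, which the paper's reduction does not need.
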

\begin{proof}
The proof of this corollary follows by noting a possible rearrangement of the columns using condition ($2$).
\end{proof}
\begin{theorem}
Let $A$ be a restricted triangular matrix with finitely many non-zero entries. Let $\overset{\rightarrow}{r} \in \mathbb{Z}^{\omega} \setminus \{\overset{\rightarrow}{0}\}$. Then $\begin{pmatrix}b\overset{\rightarrow}{r} \\ A \end{pmatrix}$ is a strongly C-image partition regular matrix for some $b \in \mathbb{Q} \setminus \{0\}$.
\end{theorem}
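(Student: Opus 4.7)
The plan is to isolate the finitely many coordinates on which $b\overset{\rightarrow}{r}$ acts non-trivially, fold those into a finite image partition regular matrix, and then extend the construction inductively row by row of $A$, following the strategy used to prove Theorem \ref{thm3.2} together with that of Theorem $4.2$ in \cite{iprm2000hindman}. Choose $l \in \mathbb{N}$ such that $r_i = 0$ for all $i \geq l$, and set $\overset{\rightarrow}{s} = \langle r_0, r_1, \ldots, r_{l-1}\rangle$. Let $j:\omega \to \omega$ and $d \in \mathbb{N}$ witness that $A$ is restricted triangular. Since $j$ is increasing, only finitely many rows of $A$ --- say those indexed by $i \in \{0,1,\ldots,i_0-1\}$ --- satisfy $j(i) < l$, and by condition $(2)$ of the definition these rows have all their non-zero entries inside the first $l$ columns. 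Let $A'$ be the $i_0 \times l$ submatrix formed by the first $i_0$ rows and first $l$ columns of $A$; it is a finite restricted triangular, hence image partition regular, matrix. Theorem $1.2(d)$ of \cite{iprm2} then supplies $b \in \mathbb{Q}\setminus\{0\}$ for which the finite matrix $M = \begin{pmatrix} b\overset{\rightarrow}{s} \\ A' \end{pmatrix}$ is image partition regular.

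Now fix a C-set $C \subseteq \mathbb{N}$ and, via Theorem \ref{thm1.8}, pick an idempotent $p \in \overline{C}\cap J(\mathbb{N})$. Let $C^{*} = \{n \in C : -n + C \in p\}$, so $C^{*} \in p$ and $-n+C^{*} \in p$ for every $n \in C^{*}$; moreover $k\mathbb{N} \in p$ for every $k \in \mathbb{N}$ by Lemma $6.6$ of \cite{hindman1998algebra}. Apply Theorem \ref{thm1.10} to $M$ and $C^{*}$ to select $(x_0, \ldots, x_{l-1}) \in \mathbb{N}^{l}$ whose images under $M$ all lie in $C^{*}$. This pins down $b\overset{\rightarrow}{r}\cdot \overset{\rightarrow}{x}$ as well as every entry of $A\overset{\rightarrow}{x}$ coming from rows $i < i_0$; a further refinement of $C^{*}$ within $p$ lets us also guarantee pairwise distinctness of those entries. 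For the remaining rows (those with $j(i) \geq l$), extend $(x_n)_{n \geq l}$ inductively: at stage $n$, if some (necessarily unique, as $j$ is strictly increasing) row $i$ satisfies $j(i) = n$, use that $a_{i,n} \in \{1,\ldots,d\}$, that $a_{i,n}\mathbb{N} \in p$, and that $-y + C^{*} \in p$ for every $y \in C^{*}$ to pick $x_n \in \mathbb{N}$ with $y_i' + a_{i,n}x_n \in C^{*}$, where $y_i' = \sum_{k<n}a_{i,k}x_k$, and such that the resulting entry differs from all finitely many previously produced ones; if no such row exists, choose $x_n$ divisible by $\operatorname{lcm}(1,\ldots,d)$ to keep future partial sums in step. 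Condition $(3)$ of the restricted triangular definition ensures that rows $k > i$, whose entries in column $n = j(i)$ are already divisible by every $t \in \{1,\ldots,d\}$, are not spoiled by the choice made at stage $n$.

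The main obstacle is this inductive step: at each stage one must simultaneously realise a congruence-type condition in $C^{*}$ (solving for $x_n$ with $a_{i,n}x_n$ in a specified translate of $C^{*}$) and maintain distinctness of the entries of the product. The first requirement hinges on $p$ being divisible, i.e. $p \in \bigcap_{k \geq 1}\overline{k\mathbb{N}}$, together with the identity $-y+C^{*} \in p$ for $y \in C^{*}$; the second is automatic because $p$ is a non-principal ultrafilter, so every member of $p$ is infinite and avoidance of a finite bad set costs nothing. Once the induction produces $\overset{\rightarrow}{x} \in \mathbb{N}^{\omega}$, every coordinate of $\begin{pmatrix}b\overset{\rightarrow}{r}\\ A\end{pmatrix}\overset{\rightarrow}{x}$ lies in $C^{*} \subseteq C$ with distinct values across distinct rows, which is precisely strong C-image partition regularity.
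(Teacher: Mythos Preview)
Your overall strategy---handle the finitely many ``completed'' rows of $A$ together with $b\overset{\rightarrow}{r}$ via a finite image partition regular matrix, then run the restricted-triangular induction for the remaining rows---is the same as the paper's. However, there is a genuine gap in the splice between the two stages.

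In the inductive step you write that for a row $i$ with $j(i)=n\geq l$ one uses ``$-y+C^{*}\in p$ for every $y\in C^{*}$'' to solve $y_i'+a_{i,n}x_n\in C^{*}$. But nothing in your construction forces $y_i'=\sum_{k<n}a_{i,k}x_k$ to lie in $C^{*}$; for rows $i\geq i_0$ the partial sum is built from the $x_0,\dots,x_{l-1}$ coming out of Theorem~\ref{thm1.10}, and those were chosen only so that the rows of $M$ land in $C^{*}$, not so that later partial sums do. What the restricted-triangular induction actually needs (and what your handling of free columns $n\geq l$ correctly anticipates) is that $y_i'$ be divisible by $a_{i,j(i)}$; then $y_i'+a_{i,j(i)}\mathbb{N}$ is a cofinite subset of $a_{i,j(i)}\mathbb{N}\in p$ and meets $C^{*}$. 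Condition~(3) guarantees this divisibility for contributions from columns $j(i')$ with $i'<i$, and your choice $x_n\in\operatorname{lcm}(1,\dots,d)\mathbb{N}$ handles free columns $n\geq l$, but the free columns $k<l$ are uncontrolled. A concrete failure: take $d=2$, $j(i)=2i+1$, row~$0$ equal to $(1,1,0,0,\dots)$, row~$1$ equal to $(1,2,0,2,0,\dots)$, $\overset{\rightarrow}{r}=(1,1,0,\dots)$, and $C=2\mathbb{N}$. Your matrix $M$ has both rows proportional to $(1,1)$, so $x_0=x_1=1$ is a legitimate output of Theorem~\ref{thm1.10}; then $y_1'=1\cdot 1+2\cdot 1=3$ is odd, while $a_{1,3}=2$, so $y_1'+2x_3$ can never lie in $2\mathbb{N}$.

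The paper closes exactly this gap by enlarging the finite matrix: in addition to $b\overset{\rightarrow}{r}$ and the upper-left corner $B$ of $A$, it adjoins diagonal rows $b_0e_0,\dots,b_le_l$ (obtaining each $b_t$ from repeated use of Theorem~1.2(d) of \cite{iprm2}) and then solves inside the C-set $C\cap\mathbb{N}cd!$. Since $b_tx_t\in\mathbb{N}cd!$ forces $x_t\in\mathbb{N}d!$ for every $t\leq l$, the initial block of $\overset{\rightarrow}{x}$ now satisfies the divisibility the restricted-triangular induction requires, and the proof of Theorem~4.2 of \cite{iprm2000hindman} can be invoked verbatim for the remaining coordinates. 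Adding this diagonal block to your $M$ (and solving in $C^{*}\cap\mathbb{N}cd!$ rather than $C^{*}$) repairs your argument.
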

\begin{proof}
Pick $d \in \mathbb{N}$ and $j : \omega \to \omega$ according as Definition $1.3$. Take $l \geqslant j(0)$ such
that $r_i = 0$ for all $i > l$. Also pick $\gamma \in \omega$ such that $j(\gamma) \leqslant l < j(\gamma + 1)$.

Call $B$ to be the upper left $(\gamma +1)\times(l+1)$ corner of $A$. By Theorem $1.4$, $A$ is C-image partition regular and therefore $B$ is image partition regular. Applying Theorem \ref{thm3.2}, $l+2$ times, pick $b_0, b_1, \cdots , b_l, b$ in $\mathbb{Q}$ such that 
$$D=\begin{pmatrix}br_0 & br_1 & br_2 & \cdots & br_l\\ b_0 & 0 & o & \cdots & 0 \\ 0 & b_1 & o & \cdots & 0 \\ 0 & 0 & b_2 & \cdots & 0 \\ \vdots & \vdots & \vdots & \ddots & \vdots \\ 0 & 0 & 0 & \cdots & b_l \\ & & B & & \end{pmatrix}$$ is image partition regular. We claim that $\begin{pmatrix}b\overset{\rightarrow}{r} \\ A \end{pmatrix}$ is C-image partition regular. For that let $C$ be a C-set and let $c$ be a common multiple of the numerators of $b_0, b_1, \cdots , b_l$.
Then $C \cap \mathbb{N}cd!$ is again a C-set. By Theorem \ref{thm3.2}, pick $x_0, x_1, \cdots , x_l$ such that all entries of $D \begin{pmatrix}x_0\\ x_1 \\ \vdots \\ x_l \end{pmatrix}$ are in $C \cap \mathbb{N}cd!$ and are distinct. For $t \in \{0, 1, \cdots , l\}$, one has in particular that $b_tx_t \in \mathbb{N}cd!$ and consequently $x_t \in \mathbb{N}d!$. For $t > l$, choose $x_t= d!$ exactly as in the proof of Theorem $1.4$. One concludes immediately that all entries of $\begin{pmatrix}b\overset{\rightarrow}{r} \\ A \end{pmatrix} \overset{\rightarrow}{x}$ are in $C$ and are unequal.
\end{proof}
\begin{theorem}
Let $A$ be a C-image partition regular matrix and let $\langle b_n \rangle_{n=0}^{\infty}$ be a sequence of positive integers. Let $$B = \begin{pmatrix} b_0 & 0 & o & \cdots \\ 0 & b_1 & o & \cdots \\ 0 & 0 & b_2 & \cdots \\ \vdots & \vdots & \vdots & \ddots \end{pmatrix}. \text{ Then the matrix } \begin{pmatrix}
\textbf{O} & B \\ A & \textbf{O} \\ A & B \end{pmatrix}$$ is C-image partition regular.
\end{theorem}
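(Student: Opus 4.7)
My plan is to reduce C-image partition regularity of the block matrix to the C-image partition regularity of $A$, using the standard $C^*$-trick with an idempotent in $\overline{C}\cap J(\mathbb{N})$ together with the fact that every principal ideal $b\mathbb{N}$ is a member of each idempotent of $(\beta\mathbb{N},+)$. Writing the argument of the block matrix as $\begin{pmatrix}\overset{\rightarrow}{x}\\ \overset{\rightarrow}{z}\end{pmatrix}$ with $\overset{\rightarrow}{x},\overset{\rightarrow}{z}\in\mathbb{N}^\omega$, the image equals the column stack of $B\overset{\rightarrow}{z}$, $A\overset{\rightarrow}{x}$, and $A\overset{\rightarrow}{x}+B\overset{\rightarrow}{z}$. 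So, given a C-set $C$, it suffices to produce $\overset{\rightarrow}{x},\overset{\rightarrow}{z}\in\mathbb{N}^\omega$ such that all three of these vectors have every coordinate in $C$.

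By Theorem \ref{thm1.8}, fix an idempotent $p\in\overline{C}\cap J(\mathbb{N})$, and set $C^{*}=\{x\in C:-x+C\in p\}$. By Lemma $4.14$ of \cite{hindman1998algebra}, $C^{*}\in p$; in particular $p\in\overline{C^{*}}\cap J(\mathbb{N})$ witnesses, via Theorem \ref{thm1.8}, that $C^{*}$ is itself a C-set. Apply the C-image partition regularity of $A$ to the C-set $C^{*}$ to obtain $\overset{\rightarrow}{x}\in\mathbb{N}^\omega$ with $A\overset{\rightarrow}{x}\in(C^{*})^\omega$. Denoting $y_i:=(A\overset{\rightarrow}{x})_i$, we then have $y_i\in C^{*}$, and hence $-y_i+C\in p$, for every $i\in\omega$.

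To construct $\overset{\rightarrow}{z}$, fix $i\in\omega$. By Lemma $6.6$ of \cite{hindman1998algebra} we have $b_i\mathbb{N}\in p$, so the set $C\cap(-y_i+C)\cap b_i\mathbb{N}$ lies in $p$ and in particular is non-empty. Pick any $w_i$ in it and put $z_i=w_i/b_i\in\mathbb{N}$. Then $b_iz_i=w_i\in C$ and $y_i+b_iz_i=y_i+w_i\in C$ because $w_i\in -y_i+C$. Consequently every coordinate of $B\overset{\rightarrow}{z}$, $A\overset{\rightarrow}{x}$, and $A\overset{\rightarrow}{x}+B\overset{\rightarrow}{z}$ lies in $C$, which is exactly what was required.

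The main difficulty is conceptual rather than computational: one has to notice that the extra $B$-column on the right of the block matrix imposes only the local constraint that some multiple of $b_i$ lie in the translate $-y_i+C$, and the $C^{*}$-trick puts each of these translates into the fixed idempotent $p$; after that the divisibility $b_i\mathbb{N}\in p$ finishes things off coordinate by coordinate, so no new $J(\mathbb{N})$-theoretic input beyond what the paper already cites is needed.
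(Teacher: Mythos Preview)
Your proof is correct and essentially identical to the paper's: both pick an idempotent $p$ with $C\in p$, pass to $C^{*}=\{x\in C:-x+C\in p\}$, apply the C-image partition regularity of $A$ to the C-set $C^{*}$, and then choose the $B$-coordinates one at a time using $b_i\mathbb{N}\in p$ together with $-y_i+C\in p$. In fact your invocation of Theorem~\ref{thm1.8} to take $p\in\overline{C}\cap J(\mathbb{N})$ is the appropriate choice for C-sets; the paper's phrase ``minimal idempotent'' appears to be a slip carried over from the central-set version of the argument.
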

\begin{proof}
Let $C$ be a C-set of $\mathbb{N}$. Take a minimal idempotent $p$ in $\beta \mathbb{N}$ such that $C \in p$. Let $D := \{x \in C : −x + C \in p\}$. Then by [\cite{hindman1998algebra}, Lemma $4.14$] $D \in p$ and therefore $D$ is C-set. So we can get $\overset{\rightarrow}{x} \in N^{\omega}$ such that $A \overset{\rightarrow}{x} \in D^{\omega}$.

Define $c_n = \sum_{t=0}^{\infty}a_{n,t} \cdot  x_t$, for any given $n \in \omega$. Then $C \cap (−c_n + C) \in p$, so pick $z_n \in
C \cap (−c_n + C) \cap \mathbb{N}b_n$ and let $y_n = \frac{z_n}{b_n}$. Thus we get $$\begin{pmatrix}
\textbf{O} & B \\ A & \textbf{O} \\ A & B \end{pmatrix} \begin{pmatrix} \overset{\rightarrow}{x} \\ \overset{\rightarrow}{y} \end{pmatrix} \in C^{\omega + \omega + \omega}.$$
\end{proof}
Let us quickly recall the following definition which is Definition $4.8$ in \cite{iprm2000hindman}.
\begin{definition}
Let $C$ be a  $\gamma \times \delta$ matrix with finitely many non-zero entries in each row, for some $ \gamma, \delta \in \omega \cup \{\omega\}$. For each $t < \delta$, let $B_t$ be a $u_t \times v_t$ (finite) matrix. Let $R =
\{(i, j) : i < \gamma \text{ and } j \in \bigtimes_{ t < \delta}\{0, 1, \cdots , u_t − 1\}\}$. Given $t < \delta$ and
$k \in \{0, 1, \cdots , u_t − 1\}$, denote the $k$-th row of $B_t$ by the notation $\overset{\rightarrow}{b}_{k}^{(t)}$ . Then $D$ is said to be an insertion matrix of $\langle B_t \rangle_{t<\delta}$ into $C$ if and only if the rows of $D$ are all rows of the form $$c_{i,0} \cdot \overset{\rightarrow}{b}_{j(0)}^{(0)} \frown c_{i,1} \cdot \overset{\rightarrow}{b}_{j(1)}^{(1)} \frown \cdots$$ where $(i,j) \in R$.
\end{definition}
For example we can consider that one which is given in \cite{iprm2000hindman}. Suppose $C =
\begin{pmatrix}
1 & 0\\
2 & 1
\end{pmatrix}$
, $B_0 =
\begin{pmatrix}
1 & 1 \\
5 & 7
\end{pmatrix}$
, and $B_1 =
\begin{pmatrix}
0 & 1 \\ 
3 & 3 
\end{pmatrix}$
, then the following matrix
$$D = \begin{pmatrix}
1 & 1 & 0 & 0 \\
5 & 7 & 0 & 0 \\
2 & 2 & 0 & 1 \\
2 & 2 & 3 & 3 \\
10 & 14 & 0 & 1 \\
10 & 14 & 3 & 3 \\
\end{pmatrix}$$
is an insertion matrix of $\langle B_t \rangle_{t < 2}$ into $C$.
\begin{theorem}
Let $C$ be a segmented first entries matrix. Also let $B_t$ be a $u_t \times v_t$ (finite) image partition regular matrix, for each $t < \omega$. Then any insertion matrix of $\langle B \rangle_{t<\omega}$ into $C$ is C-image partition regular.
\end{theorem}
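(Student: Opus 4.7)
The plan is to show that any insertion matrix $D$ of $\langle B_t \rangle_{t<\omega}$ into $C$ is itself a segmented image partition regular matrix in the sense of the definition preceding Theorem~\ref{thm3.2}, and then to invoke Theorem~\ref{thm3.2} to conclude that $D$ is strongly C-image partition regular, which in particular gives C-image partition regularity.

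Let $\langle \alpha_n \rangle_{n=0}^{\infty}$ be the increasing sequence in $\omega$ witnessing that $C$ is a segmented first entries matrix, so that for each $n$ the non-zero rows of $C$ restricted to columns in $[\alpha_n, \alpha_{n+1})$ form a finite first entries matrix $C_n$. Define $\beta_0 = 0$ and $\beta_n = \sum_{t < \alpha_n} v_t$ for $n \geq 1$. Assuming, as we may, that every $v_t \geq 1$ (columns of width zero may be absorbed into neighboring blocks), the sequence $\langle \beta_n \rangle_{n=0}^{\infty}$ is strictly increasing and takes values in $\omega$.

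First I would verify the basic structural requirements on $D$: no row of $D$ is $\overset{\rightarrow}{0}$, because $C$ has no zero row and each $B_t$, being finite image partition regular, also has no zero row; moreover, each row of $D$ has only finitely many non-zero entries, since a row indexed by $(i,j) \in R$ has a non-zero block in position $t$ only when $c_{i,t} \neq 0$, and this occurs for only finitely many $t$ by the hypothesis on $C$.

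The key step is to examine, for each $n$, the restriction of $D$ to the column window $[\beta_n, \beta_{n+1})$. Modulo repetition, the distinct non-zero rows appearing there are precisely the rows of the insertion matrix of the finite collection $B_{\alpha_n}, B_{\alpha_n + 1}, \ldots, B_{\alpha_{n+1}-1}$ into the finite first entries matrix $C_n$. By the finite analogue established in \cite{iprm2000hindman} --- that an insertion of finite image partition regular matrices into a finite first entries matrix is itself finite image partition regular --- this restricted block is a finite image partition regular matrix. Consequently $D$ is a segmented image partition regular matrix with witnessing sequence $\langle \beta_n \rangle_{n=0}^{\infty}$, and Theorem~\ref{thm3.2} applies to yield strongly (hence ordinarily) C-image partition regularity of $D$.

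The main obstacle, or really the most delicate bookkeeping, is the identification in the key step: one has to match carefully a row of $D$ indexed by $(i,j) \in R$, when restricted to $[\beta_n, \beta_{n+1})$, with a row of the finite insertion matrix built from $C_n$ and the $B_t$'s with $\alpha_n \leq t < \alpha_{n+1}$, and to confirm that varying $(i,j)$ over $R$ recovers \emph{all} such rows. Once this correspondence is in place, the finite-case theorem from \cite{iprm2000hindman} delivers finite image partition regularity of each block, and the rest of the argument is a direct appeal to Theorem~\ref{thm3.2}.
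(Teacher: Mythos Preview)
Your approach is correct but takes a genuinely different route from the paper's. You argue that the insertion matrix $D$ is itself a segmented image partition regular matrix: using the column windows $[\beta_n,\beta_{n+1})$ you identify each block with a \emph{finite} insertion of $B_{\alpha_n},\dots,B_{\alpha_{n+1}-1}$ into the finite first entries matrix $C_n$, quote the finite insertion result to get finite image partition regularity of each block, and then apply Theorem~\ref{thm3.2} once to $D$. The paper instead never shows that $D$ is segmented image partition regular. It replaces each $B_t$ by a finite first entries matrix $D_t$ (via Theorem~1.5(g) of \cite{iprm2000hindman}) with the property that every $D_t$-image is also a $B_t$-image, forms the parallel insertion matrix $E$ of the $D_t$'s into $C$, uses Lemma~4.9 of \cite{iprm2000hindman} to see that $E$ is a segmented \emph{first entries} matrix, applies Theorem~\ref{thm3.2} to $E$ to get $\overset{\rightarrow}{y}$ with $E\overset{\rightarrow}{y}$ in the given C-set, and then converts $\overset{\rightarrow}{y}$ block by block into an $\overset{\rightarrow}{x}$ with $A\overset{\rightarrow}{x}=E\overset{\rightarrow}{y}$. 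Your route is more structural and avoids the conversion step; the paper's route avoids having to invoke the finite insertion theorem as a black box. One small caveat: the ``finite analogue'' you cite is not stated verbatim in \cite{iprm2000hindman} --- what appears there is the infinite centrally-image-partition-regular version (Theorem~4.10) --- so strictly speaking you would need to note that the finite case follows either from that result together with the equivalence of image partition regularity and central image partition regularity for finite matrices, or by the same first-entries reduction the paper uses.
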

\begin{proof}
Let us take $A$ to be an insertion matrix of $\langle B \rangle_{t<\omega}$ into $C$. For each $t \in \omega$, pick by
Theorem $1.5(g)$ of \cite{iprm2000hindman}, some $m_t \in \mathbb{N}$ and a $u_t \times m_t$ first entries matrix $D_t$ with the property that for all $\overset{\rightarrow}{y} \in \mathbb{N}^{m_t}$ there exists $ \overset{\rightarrow}{x} \mathbb{N}^{v_t}$ such that $B_t \overset{\rightarrow}{x} = D_t \overset{\rightarrow}{y}$. Let $E$ be another insertion matrix of $\langle D_t \rangle_{t<\omega}$ into $C$ where the rows occur in the corresponding position to those of $A$. That is, if $i < \omega$ and $j \in \bigtimes_{t<\omega} \{0, 1, \cdots , u_t − 1\}$ and $$c_{i,0} \cdot \overset{\rightarrow}{b}_{j(0)}^{(0)} \frown c_{i,1} \cdot \overset{\rightarrow}{b}_{j(1)}^{(1)} \frown \cdots$$ is row $k$ of $A$, then $$c_{i,0} \cdot \overset{\rightarrow}{d}_{j(0)}^{(0)} \frown c_{i,1} \cdot \overset{\rightarrow}{d}_{j(1)}^{(1)} \frown \cdots$$ is row $k$ of $E$.

Let $H$ be a C-set of $\mathbb{N}$. By Lemma $4.9$ of \cite{iprm2000hindman}, $E$ is a segmented first entries
matrix. Therefore, pick $\overset{\rightarrow}{y} \in \mathbb{N}^{\omega}$ such that all entries of $E\overset{\rightarrow}{y}$ are contained in $H$. Let $\delta_0 =  \gamma_0 = 0$ and for $n \in \mathbb{N}$ take $\delta_n := \sum_{t=0}^{n−1} v_t$ and $\gamma_n := \sum_{t=0}^{n−1} m_t$. For each $n \in \omega$, pick
$$\begin{pmatrix}
x_{\delta_n} \\ x_{\delta_{n}+1}\\ \vdots \\ x_{\delta_{n+1}-1}
\end{pmatrix} \in \mathbb{N}^{v_n} \text{ such that } B_t \begin{pmatrix}
x_{\delta_n} \\ x_{\delta_{n}+1}\\ \vdots \\ x_{\delta_{n+1}-1}
\end{pmatrix} = D_t \begin{pmatrix}
y_{\gamma_n} \\ y_{\gamma_{n}+1}\\ \vdots \\ y_{\gamma_{n+1}-1}
\end{pmatrix}.$$ Then it is clear that $A \overset{\rightarrow}{x} = E \overset{\rightarrow}{y}$.
\end{proof}
At the end of the paper we raise the following question.

\textbf{Question:} Is it true that every C-image partition regular matrices are Centrally image partition regular?

\bibliographystyle{abbrvnat}
\bibliography{bibfile}

\begin{thebibliography}{9}
\providecommand{\natexlab}[1]{#1}
\providecommand{\url}[1]{\texttt{#1}}
\expandafter\ifx\csname urlstyle\endcsname\relax
  \providecommand{\doi}[1]{doi: #1}\else
  \providecommand{\doi}{doi: \begingroup \urlstyle{rm}\Url}\fi

\bibitem[De et~al.(2008)De, Hindman, and Strauss]{de2008new}
D.~De, N.~Hindman, and D.~Strauss.
\newblock A new and stronger central sets theorem.
\newblock \emph{Fund. Math}, 199\penalty0 (2):\penalty0 155--175, 2008.

\bibitem[Deuber et~al.(1995)Deuber, Hindman, Leader, and Lefmann]{deu95inf}
W.~A. Deuber, N.~Hindman, I.~Leader, and H.~Lefmann.
\newblock Infinite partition regular matrices.
\newblock \emph{Combinatorica}, 15\penalty0 (3):\penalty0 333--355, 1995.
\newblock ISSN 0209-9683.
\newblock URL \url{https://doi.org/10.1007/BF01299740}.

\bibitem[Furstenberg(1981)]{frus81}
H.~Furstenberg.
\newblock \emph{Recurrence in ergodic theory and combinatorial number theory}.
\newblock Princeton University Press, Princeton, N.J., 1981.
\newblock ISBN 0-691-08269-3.
\newblock M. B. Porter Lectures.

\bibitem[Hindman and Lefmann(1993)]{prmpcs1993hindman}
N.~Hindman and H.~Lefmann.
\newblock Partition regularity of {$(M,P,C)$}-systems.
\newblock \emph{J. Combin. Theory Ser. A}, 64\penalty0 (1):\penalty0 1--9,
  1993.
\newblock ISSN 0097-3165.
\newblock URL \url{https://doi.org/10.1016/0097-3165(93)90084-L}.

\bibitem[Hindman and Strauss(1998)]{hindman1998algebra}
N.~Hindman and D.~Strauss.
\newblock \emph{Algebra in the Stone-{\v{C}}ech compactification: theory and
  applications}, volume~27.
\newblock Walter de Gruyter, 1998.

\bibitem[Hindman and Strauss(2000)]{iprm2000hindman}
N.~Hindman and D.~Strauss.
\newblock Infinite partition regular matrices. {II}. {E}xtending the finite
  results.
\newblock In \emph{Proceedings of the 15th {S}ummer {C}onference on {G}eneral
  {T}opology and its {A}pplications/1st {T}urkish {I}nternational {C}onference
  on {T}opology and its {A}pplications ({O}xford, {OH}/{I}stanbul, 2000)},
  volume~25, pages 217--255 (2002), 2000.

\bibitem[Hindman et~al.(2003)Hindman, Leader, and Strauss]{iprm2}
N.~Hindman, I.~Leader, and D.~Strauss.
\newblock Infinite partition regular matrices: solutions in central sets.
\newblock \emph{Trans. Amer. Math. Soc.}, 355\penalty0 (3):\penalty0
  1213--1235, 2003.
\newblock ISSN 0002-9947.
\newblock URL \url{https://doi.org/10.1090/S0002-9947-02-03191-4}.

\bibitem[Schur(1917)]{schur1917kongruenz}
I.~Schur.
\newblock {\"U}ber kongruenz x...(mod. p.).
\newblock \emph{Jahresbericht der Deutschen Mathematiker-Vereinigung},
  25:\penalty0 114--116, 1917.

\bibitem[Van~der Waerden(1927)]{van1927beweis}
B.~L. Van~der Waerden.
\newblock Beweis einer baudetschen vermutung.
\newblock \emph{Nieuw Arch. Wiskunde}, 15:\penalty0 212--216, 1927.

\end{thebibliography}
\end{document}